\newtheorem{Def}{Definition}[section] 
\newtheorem{thm}[Def]{Theorem}
\newtheorem{prop}[Def]{Proposition}
\newtheorem{lem}[Def]{Lemma}
\newtheorem{kor}[Def]{Corollary}
\newtheorem*{thmA}{Theorem}
\newtheorem*{propnonumber}{Proposition}
\newcommand{\Hess}{\mathrm{Hess}}
\newcommand{\grad}{\mathrm{grad}}
\newcommand{\Ric}{\mathrm{Ric}}
\newcommand{\ric}{\mathrm{ric}}
\newcommand{\scal}{\mathrm{scal}}
\newcommand{\tr}{\mathrm{tr}}
\newcommand{\A}{\mathcal{A}_n}
\newcommand{\vol}{\mathrm{vol}}
\newcommand{\R}{\mathbb{R}}
\renewcommand{\phi}{\varphi}
\theoremstyle{definition} 
\newtheorem{deff}[Def]{Definition}
\newtheorem{bsp}[Def]{Example}
\newtheorem{bem}[Def]{Remark}
\title{Bianchi-convex sets and \\ a new maximum principle for the Ricci flow}
\author{Stine Franziska Beitz}
\date{}
\begin{document}

\maketitle

\begin{abstract}
We introduce the new notion of \textit{Bianchi-convex sets}, a generalization of convex sets of algebraic curvature tensors inspired by the second Bianchi identity. It turns out that Hamilton's maximum principle for the Ricci flow can be generalized for Bianchi-convex sets. 
\end{abstract}




\section*{Introduction} 
\addcontentsline{toc}{section}{Introduction}

The notion of convexity plays an important role in many areas of mathematics. However, in various situations where this concept is used, only a weaker form of convexity is actually needed. The purpose of this paper is to exploit such an observation in the case of the Ricci flow, where convexity is of particular interest via Hamilton's maximum principle.
\medskip

Recall that in Ricci flow, metrics evolve according to the partial differential equation
\begin{align} \label{RFequation}
\frac{\partial}{\partial t}g_t = -2\ric_{g_t},
\end{align}
where $\ric_{g_t}$ denotes the Ricci tensor of $g_t$. Richard S. Hamilton, who was the first to introduce the Ricci flow in 1982 \cite{3mfdsHamilton}, proved short-time existence and uniqueness of solutions to the Ricci flow to a given initial metric and used it to show that simply-connected 3-manifolds with positive Ricci curvature evolve to a round sphere under the flow. Later, the Ricci flow was the main tool in Perelman's proof of Thurston's geometrization conjecture for three-manifolds \cite{EntropyPerelman,RicciflowWithSurgery,FiniteExtinctionTime}, which in particular implies the Poincaré conjecture and Thurston's elliptization conjecture. 
\medskip

Hamilton's maximum principle states that an $O(n)$-invariant, closed and convex subset $\Omega$ of the space $\A$ of algebraic curvature tensors, which is invariant under the ordinary differential equation
\begin{align} \label{ODEeinleitung}
R'(t) = R(t)^2 + R(t)^{\#},
\end{align}
is already invariant under the Ricci flow \cite[Theorem 4.3]{4-mfdsHamilton}. Here, $\#$ is a certain $O(n)$-equivariant quadratic map. This theorem is paramount when trying to classify manifolds that satisfy certain curvature conditions, such as compact 4-manifolds with positive curvature operator \cite{4-mfdsHamilton}, the theorem of Böhm and Wilking for compact manifolds with 2-positive curvature operator \cite{BoehmWilkingAnnals} and the differentiable sphere theorem by Brendle and Schoen \cite{1/4pinchedcurvature}. 

\medskip

In the proof of Hamilton's maximum principle, a crucial step is to show that if the Riemannian curvature tensor $Rm$ of some $n$-dimensional manifold $M$ is contained\footnote{More precisely, we transfer $\Omega$ to the fibres of the bundle of algebraic curvature tensors $S^2_B(\Lambda^2T^*M)$, which makes sense due to the $O(n)$-invariance of $\Omega$.} in $\Omega$, then it satisfies  
\begin{align*}
\sum_{i=1}^n \text{\Gemini}^{\partial \Omega}_{Rm(x)}(\nabla_{e_i}Rm, \nabla_{e_i}Rm) \leq 0
\end{align*}
for an orthonormal basis $\{e_1, \dots, e_n\}$ of $T_xM$, whenever $x \in M$ with $Rm(x) \in \partial \Omega$. If $\Omega$ is convex, this follows from the fact that each summand is non-positive as in this case (in the convention used here) the second fundamental form $\text{\Gemini}^{\partial \Omega}$ of $\partial \Omega$ is negative semidefinite. The observation is now that it suffices to ensure the non-positivity of the sum instead of the individual summands. This motivates the definition of a  \textit{Bianchi-convex set}.
\medskip

We define a closed subset $\Omega \subseteq \A$ with smooth boundary to be \textit{Bianchi-convex}, if for all $R \in \partial \Omega$ and tuples $(T_1, \dots, T_n) \in (T_R\partial \Omega)^n$ which satisfy a certain algebraic second Bianchi identity, we have that
\begin{align*}
\sum_{i=1}^n \text{\Gemini}^{\partial \Omega}_R(T_i,T_i) \leq 0,
\end{align*}
where $\text{\Gemini}^{\partial \Omega}_R$ denotes the second fundamental form of $\partial\Omega$ in $R$. (In Def.~\ref{DefBianchiconvexSet} below, we generalize to sets with non-smooth boundary.) The main goal of this paper is a proof of the following generalization of Hamilton's maximum principle.

\begin{thmA} 
Let $\Omega \subseteq \A$ be $O(n)$-invariant, closed, Bianchi-convex and uniformly transversally star-shaped with respect to $\lambda I$ for some $\lambda \in \R$ (see Definition \ref{coneCondition}). If $\Omega$ is invariant under the ordinary differential equation \eqref{ODEeinleitung},
then $\Omega$ is invariant under the Ricci flow.
\end{thmA}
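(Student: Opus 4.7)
The plan is to imitate Hamilton's original proof of his maximum principle, replacing the pointwise convexity input at boundary points by the summed inequality provided by Bianchi-convexity and using the uniform transversal star-shape condition to handle non-smoothness of $\partial\Omega$. First, via the Uhlenbeck trick one rephrases the Ricci flow as a reaction--diffusion equation for the curvature tensor on an abstract bundle: in a time-parallel frame $\{e_i\}$, the pulled-back curvature operator satisfies
\[
\partial_t Rm = \Delta Rm + 2\bigl(Rm^2 + Rm^{\#}\bigr),
\]
whose reaction part is, up to rescaling of time, the ODE \eqref{ODEeinleitung}. Using the $O(n)$-invariance of $\Omega$, transfer $\Omega$ fibrewise to the curvature bundle $S^2_B(\Lambda^2 T^*M)$, so that Theorem~A reduces to showing that sections $Rm$ with values in this fibrewise $\Omega$ remain in $\Omega$ under the PDE above.

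Next, I would use the uniform transversal star-shape condition to reduce to the smooth and strictly-transverse case. Shrinking $\Omega$ radially towards $\lambda I$ should yield a nested family $\Omega_\epsilon \subset \Omega$ of closed, $O(n)$-invariant, Bianchi-convex sets with smoother boundary, on which a small perturbation of the map $R \mapsto R^2 + R^\#$ points strictly into $\Omega_\epsilon$; this is exactly what uniform transversal star-shapedness is designed to ensure. Establishing Ricci-flow invariance of each $\Omega_\epsilon$ and letting $\epsilon \to 0$ then recovers the invariance of $\Omega$.

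For the reduced problem, I run a first-exit-time argument. Suppose invariance fails, let $t_0>0$ be the first time some value $Rm(x_0,t_0)$ touches $\partial\Omega$, and let $f$ be a local defining function with outward-pointing gradient. Two observations hold at $(x_0,t_0)$: each covariant derivative $\nabla_{e_i}Rm$ lies in $T_{Rm(x_0,t_0)}\partial\Omega$, because $x\mapsto f(Rm(x,t_0))$ attains its spatial maximum at $x_0$; and the tuple $(\nabla_{e_1}Rm,\dots,\nabla_{e_n}Rm)$ satisfies the algebraic second Bianchi identity, this being the usual differential Bianchi identity read at a single point. Bianchi-convexity consequently yields
\[
\sum_{i=1}^{n}\text{\Gemini}^{\partial\Omega}_{Rm(x_0,t_0)}\bigl(\nabla_{e_i}Rm,\nabla_{e_i}Rm\bigr)\leq 0,
\]
which, in the defining-function picture, reads $\sum_i \Hess(f)\bigl(\nabla_{e_i}Rm,\nabla_{e_i}Rm\bigr)\geq 0$. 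Combined with the strictly inward-pointing ODE field, this forces $\partial_t(f\circ Rm) - \Delta(f\circ Rm) < 0$ at $(x_0,t_0)$, contradicting the fact that $f\circ Rm$ attains a spacetime maximum there.

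The main obstacle is algebraic rather than analytic: one has to verify that the approximating sets $\Omega_\epsilon$ inherit Bianchi-convexity from $\Omega$, so that the key inequality genuinely applies throughout the approximation. This amounts to a direct computation relating the second fundamental forms of $\partial\Omega_\epsilon$ and $\partial\Omega$ under the relevant dilation and checking that tangentiality together with the algebraic Bianchi constraint is preserved; here the uniform transversality hypothesis is essential. A secondary, more routine issue is extending the notion of tangent space and supporting functional to points where $\partial\Omega$ is non-smooth, as covered by Definition~\ref{DefBianchiconvexSet}, and running the scalar maximum principle globally on a possibly non-compact $M$ via standard barrier or cut-off constructions.
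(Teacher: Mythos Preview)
Your overall architecture---Uhlenbeck trick, first-exit-time argument, using Bianchi-convexity to control the Laplacian term and the star-shape hypothesis to produce strict inward pointing---matches the paper's. The substantive divergence is in how you handle non-smoothness of $\partial\Omega$: you propose to perturb the \emph{set} by shrinking $\Omega$ radially towards $\lambda I$ to obtain $\Omega_\epsilon$ with ``smoother boundary'', whereas the paper keeps $\Omega$ fixed and instead perturbs the \emph{solution}, setting $R^\varepsilon_t := Rm_{g_t} + \varepsilon e^{bt}(\lambda I_{g_t} - Rm_{g_t})$ with a time-growing weight.

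Your reduction step has a genuine gap. An affine contraction $\Omega_\epsilon = \lambda I + (1-\epsilon)(\Omega - \lambda I)$ does \emph{not} smooth the boundary at all; $\partial\Omega_\epsilon$ is just a scaled copy of $\partial\Omega$, with the same regularity. So you cannot invoke the clean inequality $\sum_i \text{\Gemini}^{\partial\Omega_\epsilon}(\nabla_{e_i}Rm,\nabla_{e_i}Rm)\leq 0$ on a smooth boundary; you are still stuck with supporting submanifolds and the $\epsilon$-slack in Definition~\ref{DefBianchiconvexSet}. Moreover, while Bianchi-convexity of $\Omega_\epsilon$ is in fact inherited (affine maps preserve it), the ODE $R'=R^2+R^\#$ is quadratic and is \emph{not} preserved by the affine shrinking, so $\Omega_\epsilon$ is no longer ODE-invariant; you would have to quantify the resulting error, which is precisely what the paper does---but on the solution side.

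Concretely, the paper never reduces to smooth $\partial\Omega$. At the first touching point it picks a supporting submanifold $N$ of $\Omega$ realizing the Bianchi-convexity inequality with slack $\delta$ (chosen a posteriori, small enough), and Lemma~\ref{laplacezeigtnachinnen} gives only $\langle \Delta R^\varepsilon,\mathbf{n}\rangle \leq \delta\sum_i\|\nabla_{b_i}R^\varepsilon\|^2$, not $\leq 0$. The strict negativity in $\frac{d}{dt}r^N$ then comes from the barrier term $\varepsilon b e^{bt}\langle \mathbf{n},\lambda I - R^\varepsilon\rangle \leq -\varepsilon b e^{bt}a$ supplied by Lemma~\ref{coneCondImplnI}, with $b$ chosen large enough to dominate both the $\delta$-term and the quadratic error terms arising because $R^\varepsilon$ (unlike $Rm$) does not exactly solve the curvature PDE. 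Finally, note that Definition~\ref{invariantUnderRF} restricts to compact $M$, so your remarks about barriers on non-compact manifolds are unnecessary here.
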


Here, $I$ denotes the identity in $\A$. The requirement that $\Omega$ is uniformly transversally star-shaped with respect to $\lambda I$ is a technical condition that is satisfied in all known cases where $\Omega$ is $O(n)$-invariant, closed, convex and invariant under \eqref{ODEeinleitung} (c.f.\ Remark \ref{HamiltonSetsUTS}). 

\medskip

In order to give applications for this theorem, one needs to gain some understanding of Bianchi-convex sets. In dimension three, this was achieved by the author in her thesis \cite{ThesisSFB}, at least for sets of the form 
\begin{align*}
\Omega_f := \{ R \in \mathcal{A}_3 \mid f(\lambda_1(R), \lambda_2(R), \lambda_3(R)) \leq 0 \}
\end{align*}
for a smooth symmetric function $f: \R^3 \rightarrow \R$ with $f^{-1}(0) \neq \emptyset$ such that 0 is a regular value of $f$ (here, $\lambda_1(R) \leq \lambda_2(R) \leq \lambda_3(R)$ denote the eigenvalues of $R$). We review these results in Section~\ref{BconvexSetsInDim3}. A particular result is the following example for a Bianchi-convex set which is not convex.

\begin{propnonumber}
For $a \in (\frac{1}{3}, \frac{2}{5})$ and $c>0$, there exists a constant $b_{a,c}>0$ such that the set
\begin{align*}
\big\{R \in \mathcal{A}_3 \ \big{|} \ \|R\|^2 - a\, \scal(R)^2 \leq c \ \ \text{and} \ \ \scal(R) \geq b_{a,c} \big\}
\end{align*}
is Bianchi-convex and invariant under \eqref{ODEeinleitung}, thus invariant under the Ricci flow. 
\end{propnonumber}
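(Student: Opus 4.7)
The plan is to write $\Omega$ as the intersection $\Omega_{f_1}\cap\Omega_{f_2}$ of two $O(n)$-invariant sublevel sets with $f_1(R)=\|R\|^2-a\,\scal(R)^2-c$ and $f_2(R)=b_{a,c}-\scal(R)$, and then to verify the hypotheses of Theorem~A in turn. Closedness and $O(n)$-invariance are immediate; the substantive conditions are Bianchi-convexity, invariance under \eqref{ODEeinleitung}, and uniform transversal star-shapedness with respect to some $\lambda I$, and the free parameter $b_{a,c}$ will be chosen at the end to close all three simultaneously.

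For Bianchi-convexity, the half-space $\Omega_{f_2}$ is convex and hence Bianchi-convex. The more interesting piece $\Omega_{f_1}$ is the non-convex (for $a>1/3$) analogue of a two-sheeted hyperboloid, and its Bianchi-convexity should fall out of the classification of Bianchi-convex sets $\Omega_f$ carried out in Section~\ref{BconvexSetsInDim3}: the interval $a\in(1/3,2/5)$ is exactly the window in which $\Omega_{f_1}$ is Bianchi-convex while failing to be convex. Smooth boundary points of $\partial\Omega$ involve only one of the $f_i$ and reduce to Bianchi-convexity of the individual factors; at the corner points where both constraints are active, one uses Definition~\ref{DefBianchiconvexSet}, checking the condition against the two-dimensional normal cone spanned by $\nabla f_1$ and $-\nabla f_2$.

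For invariance under \eqref{ODEeinleitung} one differentiates $f_1$ and $f_2$ along solutions. In dimension three, \eqref{ODEeinleitung} reads $\lambda_i'=\lambda_i^2+\lambda_j\lambda_k$ for $\{i,j,k\}=\{1,2,3\}$; Newton's identities then give $\tfrac{d}{dt}\scal$ proportional to $p_1^2+p_2\geq 0$ (where $p_k:=\tr R^k$), so $\Omega_{f_2}$ is preserved. On the face $\{f_1=0\}$, writing $\mu_i:=\lambda_i-\scal/3$ constrains $\sum\mu_i^2=(a-\tfrac{1}{3})\scal^2+c$ inside the plane $\{\sum\mu_i=0\}$; bounding $\sigma_3:=\lambda_1\lambda_2\lambda_3$ above by the maximum of the product on this sphere yields an explicit upper estimate for $\tfrac{d}{dt}f_1$ whose leading $\scal^3$ coefficient is strictly negative throughout $a\in(1/3,2/5)$ (the borderline case of the comparison is $a=1$, coming from a perfect-square discriminant). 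The lower-order terms are then dominated by choosing $b_{a,c}$ large enough, which fixes the threshold. Finally, uniform transversal star-shapedness with respect to $\lambda I$ for $\lambda$ sufficiently large follows because $\lambda I$ sits deep in the interior of $\Omega$ (both constraints become strict), and uniform angle estimates against $\nabla f_1,\nabla f_2$ are routine on the compact slices $\Omega\cap\{\scal\leq K\}$.

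The main obstacle is the Bianchi-convexity of $\Omega_{f_1}$: unlike convexity, this cannot be checked summand-by-summand, but only after exploiting the full algebraic second Bianchi identity to trade contributions across the three summands $\text{\Gemini}^{\partial\Omega}_R(T_i,T_i)$. It is exactly this trade that pins down the upper endpoint $a=2/5$, and is presumably the content of Section~\ref{BconvexSetsInDim3}; once available, the ODE-invariance and star-shapedness verifications become essentially a matter of choosing $b_{a,c}$ (and $\lambda$) large.
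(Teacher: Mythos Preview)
Your plan is correct and matches the paper's route exactly: decompose as $\Omega_{a,c}\cap\{\scal\geq b_{a,c}\}$, invoke Theorem~\ref{characterizationBconvexSets} for the Bianchi-convexity of the hyperboloid piece, handle ODE-invariance by differentiating along solutions (the paper defers this computation to \cite{ThesisSFB} and records the explicit choice $b_{a,c}=\sqrt{3c/(3a-1)}\,\sinh(3/2)$), check uniform transversal star-shapedness, and apply Theorem~A. One simplification you overlooked: corner points need no separate normal-cone argument, since Lemma~\ref{SchnitteWiederBkonvex} already shows that any intersection of Bianchi-convex sets is Bianchi-convex---Definition~\ref{DefBianchiconvexSet} asks only for \emph{one} supporting submanifold at each boundary point, and either factor supplies one.
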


This paper is organized as follows. After discussing some preliminary notions, in Chapter 2 we introduce curvature conditions $\Omega$ and the corresponding subsets $\Omega^g$ of the bundle of algebraic curvature tensors over a Riemannian manifold $(M,g)$. Moreover, we give some properties of tangent cones we use later in the proof as well as a connection to subsets of a vector space which are invariant under an ordinary differential equation of the form $f'(t) = \Phi(f(t))$. In Chapter 3, we investigate the notion of Bianchi-convex sets and in dimension three give examples of such sets of algebraic curvature tensors that are not convex but invariant under \eqref{ODEeinleitung}. The fourth chapter is dedicated to a proof of Theorem A.  \\

\textbf{Acknowledgements.}
At this point, I would like to thank my advisor Burkhard Wilking for suggesting this interesting topic to me, for his guidance and many helpful discussions. 
I am also grateful to Christoph Böhm, Ramiro Lafuente, Jonas Stelzig, Artem Nepechiy and Ricardo Mendes for numerous inspiring conversations.
My special thanks go out to Matthias Ludewig for his continuing support and encouragement, careful proofreading and many useful proposals. 
Moreover, I  thank the SFB 878 and the MSRI for financial support, and the University of Adelaide for hospitality.

\section{Preliminaries}

This chapter is dedicated to introducing all known objects, spaces, notions and facts that will be important throughout the  present work. We define the space of algebraic curvature tensors, which plays a crucial role when investigating the curvature of a Riemannian manifold, and give some first properties. Moreover, we consider solutions to the Ricci flow and introduce a corresponding metric connection on space-time, which allows to formulate the evolution equation of the Riemannian curvature operator under this  flow.


\subsection[The space of algebraic curvature tensors]{The space of algebraic curvature tensors} \markboth{CHAPTER 1. \ PRELIMINARIES}{1.1. \ ALGEBRAIC CURVATURE TENSORS}
 \label{SecAlgCurvTensors}

Throughout this section, let $V$ be an $n$-dimensional Euclidean vector space. On the second exterior power $\Lambda^2V^*$ of its dual space $V^*$, i.e.\ on the space of antisymmetric bilinear forms on $V$, we consider the scalar product given in such a way that for each orthonormal basis $(b^1, \dots, b^n)$ of $V^*$ the vectors $b^i \wedge b^j$, $i < j$, form an orthonormal basis of $\Lambda^2V^*$. Using the scalar product on $V$, we will freely identify $\mathfrak{so}(V)$, the space of skew-symmetric endomorphisms on $V$, and $\Lambda^2V^*$ via the isomorphism
\begin{align} \begin{split} \label{isoLambdaSo}
\mathfrak{so}(V) &\longrightarrow \Lambda^2V^* \\
A &\longmapsto \omega_A,
\end{split}
\end{align}
where $\omega_A(v,w) := \langle  v, A(w) \rangle$ for $v,w \in V$.  Moreover, we choose the scalar product on $\mathfrak{so}(V)$ in such a way that the isomorphism \eqref{isoLambdaSo} is an isometry. This means that 
\begin{align*}
\langle A, B \rangle =  - \frac{1}{2} \tr(AB)
\end{align*}
for $A, B \in \mathfrak{so}(V)$. 
\medskip

By $S^2(V^*)$, we denote the \textit{space of symmetric bilinear forms on $V$} or equivalently, using the metric, \textit{the space of self-adjoint endomorphisms of $V$}. The space $S^2(\Lambda^2V^*)$ has the subspace $S^2_B(\Lambda^2V^*)$, the \textit{space of algebraic curvature tensors associated to $V$}, i.e.\ the space of symmetric bilinear forms $R$ on $\Lambda^2V$ which satisfy the first Bianchi identity, that is
\begin{align*}
R(x \wedge y, z \wedge w) + R(y \wedge z, x \wedge w) + R(z \wedge x, y \wedge w) = 0
\end{align*}
for all $x,y,z,w \in V$.
In the case that $V = \R^n$, we will denote $S^2_B(\Lambda^2(\R^n)^*)$ by $\mathcal{A}_n$. The scalar product on $\Lambda^2V^*$ induces a scalar product on $S^2(\Lambda^2V^*)$, and hence on $S^2_B(\Lambda^2V^*)$, given by  $\langle R,S \rangle = \tr(R\circ S)$, if $R$ and $S$ are considered as self-adjoint endomorphisms on $\Lambda^2V$. 
It is well-known that, if $V$ is of dimension $n\leq 3$, we have that $S^2_B(\Lambda^2V^*) = S^2(\Lambda^2V^*)$. 
\medskip

By looking at $S^2(\Lambda^2V^*)$ as a space of endomorphisms, there is the map $R \mapsto R^2$ sending an endomorphism to its square. It was noticed by Hamilton \cite{4-mfdsHamilton} that there is a second $O(n)$-equivariant quadratic map $\#$ from $S^2(\Lambda^2V^*)$ to itself. (Note that the explicit definition of $\#$ given in  \cite{4-mfdsHamilton} differs from ours by a factor $2$ since Hamilton uses a different scalar product on $\mathfrak{so}(V)$. For our purposes, we do not need the explicit formula anyway.) It is a fact that for an algebraic curvature tensor $R$, we have that $R^2 + R^{\#}$ is an algebraic curvature tensor as well. In other words, the sum $R^2+R^{\#}$ satisfies the first Bianchi identity even though the individual summands may not.
\medskip

Corresponding to an algebraic curvature tensor $R \in S^2_B(\Lambda^2V^*)$, we define the \textit{Ricci tensors} $\ric(R): V \times V \rightarrow \R$ and $\Ric(R): V \rightarrow \R$ by
\begin{align*}
\ric(R)(v, w) = \langle \Ric(R)(v), w \rangle = \frac{1}{2} \tr(R(v \wedge \cdot \;, w \wedge \cdot \;)),
\end{align*}
where $v, w \in V$, and the \textit{scalar curvature $\scal(R)$} by
\begin{align*}
\scal(R) := \tr(\Ric(R)).
\end{align*}

Notice, that here we use a different convention for the Ricci tensor and consequently the scalar curvature than e.g.\ in \cite{BoehmWilkingAnnals} and \cite{RFtechniquesCCGGCIIKLLL}, where the factor $\frac{1}{2}$ is omitted.

\begin{bem} \label{scal=2tr}
Let $(b_1, \dots, b_n)$ be an orthonormal basis of $V$. Then $(b_i \wedge b_j)_{i<j}$ is an orthonormal basis of $\Lambda^2V$, so that for $R \in S^2_B(\Lambda^2V^*)$ we find that
\begin{align*}
\scal(R) &= \tr(\Ric(R)) = \sum_{i=1}^n \langle \Ric(R)(b_i), b_i \rangle = \frac{1}{2} \sum_{i=1}^n \tr(R(b_i \wedge \cdot, b_i \wedge \cdot)) \\
&= \frac{1}{2} \sum_{i,j=1}^n R(b_i \wedge b_j, b_i \wedge b_j) =  \sum_{i<j} R(b_i \wedge b_j, b_i \wedge b_j) \\
&=  \tr(R).
\end{align*}
\end{bem}


\subsection{The evolution equation of the curvature operator} \label{secgeomstructures}

Given a solution to the Ricci flow on a manifold $M$, we want to investigate how the geometry changes in time. For this, a connection on the tangent bundle over the manifold $M \times \R$ plays a central role. In this section, we are particularly interested in the evolution of the orthonormal frame bundle on an initial Riemannian manifold. Moreover, we formulate the evolution equation of the Riemannian curvature operator under the Ricci flow.  
\medskip

Given a  manifold $M$ equipped with a  Riemannian metric $g_0$, one can consider solutions $g_t$, $t \in [0,T)$, of the partial differential equation
\begin{align*} 
\frac{\partial}{\partial t}g_t = -2\ric_{g_t}
\end{align*}
starting at $g_0$, so-called \textit{solutions to the Ricci flow}. 
One is interested in how the Riemannian curvature operator $Rm_{g_t}$ changes in time. (Notice that here the Riemannian curvature operator $Rm_{g}$ of a Riemannian manifold $(M,g)$ is a section of the \textit{bundle of algebraic curvature tensors} $S^2_B(\Lambda^2T^*\!M)$, defined in such a way that the Riemannian curvature operator of the standard sphere is twice the identity.) To this end, we define a connection on space-time associated to such a solution $g_t$.

\begin{deff} \label{defnabla}
On the vector bundle $TM \rightarrow M \times \R$, we introduce a linear connection $\nabla$. On the times slices $M \times \{t\}$, it is given by the Levi-Civita connection $\nabla^{g_t}$ of the metric $g_t$, but it is \textit{not} the product connection: Let $\frac{\partial}{\partial t}$ be the vector field on $M \times \R$ given by $\frac{\partial}{\partial t}|_{(x,t)} := \dot{c}(t)$ for $(x,t) \in M \times \R$, where $c(t) := (x,t)$. Let further $X, Y \in \Gamma(M \times \R, TM)$ be time-dependent vector fields on $M$. Then $\nabla$ is given by
\begin{align*} \begin{split} 
(\nabla_YX)|_{(x,t)} &:= \big(\nabla^{g_t}_{Y(\cdot,t)}X(\cdot, t)\big)|_x \\
\big(\nabla_{\!\frac{\partial}{\partial t}}X\big)\big{|}_{(x,t)} &:=  \frac{\partial}{\partial t}X(x,t) - \Ric_{g_t}(X(x,t))  
\end{split}
\end{align*}
for $(x,t) \in M \times \R$.
\end{deff}

The introduced connection above extends in the usual way to a connection on the vector bundle $S^2_B(\Lambda^2T^*\!M) \rightarrow M \times \R$ by the product rule.

\begin{bem}
$\nabla$ is metric in the sense that 
\begin{align*}
\frac{\partial}{\partial t}\big(g_t(X,Y)\big) = g_t \!\left(\nabla_{\!\frac{\partial}{\partial t}}X,Y\right) + g_t\!\left(X, \nabla_{\!\frac{\partial}{\partial t}}Y\right)
\end{align*}
for $X,Y \in \Gamma(M \times \R, TM)$.
\end{bem}

Recall that the \textit{orthonormal frame bundle} $O^{g_t}$ of $(M,g_t)$ is the principle bundle with structure group $O(n)$, the fibres over points $x \in M$ of which are given by
\begin{align*}
O^{g_t}_x := \{ p: (\R^n, \langle \cdot, \cdot \rangle) \rightarrow (T_xM, (g_t)_x) \mid p \ \text{is an linear isometry}\},
\end{align*}
where $\langle \cdot, \cdot \rangle$ is the standard metric on $\R^n$. Hence, the group $O(n)$ acts freely and transitively on the fibres of $O^{g_t}$ from the right. 
\medskip

\begin{deff} \label{ptparallel}
Let $x_0 \in M$ and $p_t: \R^n \rightarrow T_{x_0}M$ be a one-parameter family of linear maps. Then $t \mapsto p_t$ is \textit{parallel}, if $t \mapsto p_t(v)$ is parallel along the curve $t \mapsto (x_0,t)$ with respect to the connection $\nabla$ (introduced in Definition \ref{defnabla}) for all $v \in \R^n$, i.e.\ if 
\begin{align*}
\frac{d}{d t}p_t(v) = \Ric_{g_t}(p_t(v))
\end{align*}
for all $v \in \R^n$.
\end{deff}

Note that Definition \ref{ptparallel} is made in such a way that if $t \mapsto p_t$ is parallel and $t \mapsto R_t \in S^2_B(\Lambda^2T^*_{x_0}M)$ is smooth, then
\begin{align*}
\frac{d}{dt}(p_t^*R_t) = p_t^*\nabla_{\!\frac{\partial}{\partial t}}R_t.
\end{align*}

Moreover, we have that if $t \mapsto p_t$ is parallel and $p_0$ is an isometry with respect to the standard metric $\langle \cdot, \cdot \rangle$ on $\R^n$ and $g_0$ on $T_{x_0}M$, then $p_t$ is an isometry with respect to $\langle \cdot, \cdot \rangle$ and $g_t$ for all $t \in \R$ as well. According to the Picard-Lindelöf theorem, this shows that given an initial isometry $p_0$ there is always a parallel curve $t \mapsto p_t \in O^{g_t}$ starting at $p_0$. Hence, the Ricci flow preserves the bundle $O^{g_.}$. 
\medskip

Using the connection $\nabla$ on the vector bundle $TM \rightarrow M \times \R$ defined in Definition \ref{defnabla},  the evolution equation of the Riemannian curvature operator under the Ricci flow is  given as follows.

\begin{lem}{\normalfont(\cite[p.155]{4-mfdsHamilton})} \label{evolutionRmallg} 
If $g_t$ is a solution to the Ricci flow on a  manifold $M$, then the Riemannian curvature operator of $g_t$ evolves under the partial differential equation
\begin{align*} 
\nabla_{\!\frac{\partial}{\partial t}}{Rm}_{g_t} = \Delta_{g_t}{Rm}_{g_t} + {Rm}_{g_t}^2 + {Rm}_{g_t}^{\#}.
\end{align*}
\end{lem}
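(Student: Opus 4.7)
The plan is to reduce the claim to Hamilton's classical evolution equation for the curvature operator written in terms of the ordinary partial time derivative, and then observe that the space-time connection of Definition \ref{defnabla} is rigged precisely to absorb the Ricci-contraction terms appearing there. I would work fibre-wise at a fixed $(x_0,t_0) \in M \times \R$, using a frame $(e_1,\dots,e_n)$ of $T_{x_0}M$ held constant in $t$ and $g_{t_0}$-orthonormal.

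First, I would recall (or re-derive from the first variation $\frac{\partial}{\partial t}\Gamma^k_{ij} = -g^{kl}(\nabla_i \ric_{jl} + \nabla_j \ric_{il} - \nabla_l \ric_{ij})$ together with the second Bianchi identity) Hamilton's classical formula: on such a frame,
\begin{align*}
\frac{\partial}{\partial t} Rm_{g_t}(e_i\wedge e_j, e_k \wedge e_l) \Big|_{t=t_0} &= \bigl[\Delta_{g_t} Rm_{g_t} + Rm_{g_t}^2 + Rm_{g_t}^\#\bigr](e_i\wedge e_j, e_k \wedge e_l) \\
&\quad - Rm(\Ric e_i \wedge e_j,\, e_k \wedge e_l) - Rm(e_i \wedge \Ric e_j,\, e_k \wedge e_l) \\
&\quad - Rm(e_i \wedge e_j,\, \Ric e_k \wedge e_l) - Rm(e_i \wedge e_j,\, e_k \wedge \Ric e_l),
\end{align*}
the Ricci-correction terms arising because a frame held constant in $t$ is $g_t$-orthonormal only at $t=t_0$.

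Second, I would unfold $\nabla_{\frac{\partial}{\partial t}}$ on $S^2_B(\Lambda^2 T^*M)$ via the product rule starting from Definition \ref{defnabla}. On a time-constant vector $X$ one has $\nabla_{\frac{\partial}{\partial t}} X = -\Ric_{g_t}(X)$; dualising to $T^*M$ (so $(\nabla_{\frac{\partial}{\partial t}}\omega)(X) = \frac{\partial}{\partial t}\omega(X) + \omega(\Ric X)$ on a time-constant $X$) and iterating the product rule yields, for a time-dependent section $T_t$ of $S^2(\Lambda^2 T^*M)$,
\begin{align*}
(\nabla_{\frac{\partial}{\partial t}} T_t)(e_i\wedge e_j, e_k \wedge e_l) &= \tfrac{\partial}{\partial t} T_t(e_i \wedge e_j, e_k \wedge e_l) + T_t(\Ric e_i \wedge e_j,\, e_k \wedge e_l) \\
&\quad + T_t(e_i \wedge \Ric e_j,\, e_k \wedge e_l) + T_t(e_i \wedge e_j,\, \Ric e_k \wedge e_l) + T_t(e_i \wedge e_j,\, e_k \wedge \Ric e_l).
\end{align*}
Specialising to $T_t = Rm_{g_t}$ and substituting the classical formula into the first term on the right, the four Ricci-correction terms cancel exactly, producing $\nabla_{\frac{\partial}{\partial t}} Rm = \Delta Rm + Rm^2 + Rm^\#$.

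The main obstacle is matching signs and normalisations: the paper uses $\ric = \frac12 \tr R(v\wedge\cdot, w\wedge\cdot)$ and a corresponding $R^\#$ that differ from those of \cite{4-mfdsHamilton} by factors of two, so one must verify that the quadratic block genuinely reassembles into $Rm^2 + Rm^\#$ in the present normalisation, and that the sign of the correction is indeed opposite to that produced by the product rule. A cleaner route that sidesteps this bookkeeping is to pick a parallel family $p_t \in O^{g_t}_{x_0}$ as in Definition \ref{ptparallel}; by the isometry property noted after that definition, $\tilde R_t := p_t^* Rm_{g_t}$ is a curve in the fixed Euclidean space $\A$, and by the identity $\frac{d}{dt}(p_t^* Rm_{g_t}) = p_t^* \nabla_{\frac{\partial}{\partial t}} Rm_{g_t}$ the claim reduces to verifying $\frac{d}{dt}\tilde R_t = \Delta \tilde R_t + \tilde R_t^2 + \tilde R_t^\#$, an ordinary time derivative of a tensor in a fixed inner-product space.
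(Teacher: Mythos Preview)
The paper does not give its own proof of this lemma: it is stated with a citation to Hamilton \cite[p.~155]{4-mfdsHamilton} and used as a black box. So there is nothing in the paper to compare your argument against beyond that reference.

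That said, your outline is the standard and correct derivation. The space-time connection of Definition~\ref{defnabla} is exactly Uhlenbeck's trick in disguise, and your observation that the four Ricci-correction terms coming from the product rule cancel the Ricci terms in Hamilton's classical evolution equation is precisely the point of introducing $\nabla_{\frac{\partial}{\partial t}}$. Your alternative route via a parallel family $p_t$ is also fine (and indeed is how the paper itself uses the lemma later, e.g.\ in the proof of Theorem~\ref{meinMP}). The only caution worth flagging is the one you already identified: the factor-of-two discrepancy between the conventions here and in \cite{4-mfdsHamilton} for $\ric$, the $\mathfrak{so}(n)$ inner product, and hence $R^{\#}$, must be tracked carefully if you actually carry out the computation rather than appeal to the literature.
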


Here, $\Delta_g=(\nabla^g)^*\nabla^g$ is the Laplace operator acting on sections $R$ of the bundle of algebraic curvature tensors $S^2_B(\Lambda^2T^*\!M)$. One way to define this is by the formula 
\begin{align*} 
(\Delta_g R)(x) := \sum_{i=1}^n \frac{(\nabla^g)^2}{ds^2}\bigg{|}_{s=0} R(\gamma_i(s)),
\end{align*}
where $x \in M$, $\gamma_1, \dots, \gamma_n$ are $g$-geodesics in $M$ such that $\gamma_i(0) = x$ for $i=1, \dots, n$ and $(\dot{\gamma_1}(0), \dots, \dot{\gamma_n}(0))$ is an orthonormal basis of $T_xM$ with respect to $g$.


\section{Curvature conditions and ODE-invariance}

This chapter is dedicated to introducing curvature conditions, that is $O(n)$-invariant subsets $\Omega$ of the space of algebraic curvature tensors $\A$. To these sets, one can associate subsets $\Omega^g$ of the bundle of algebraic curvature tensors over a Riemannian manifold $(M,g)$, which are invariant under parallel transport by the Levi-Civita connection $\nabla^g$ of $(M,g)$. Using this notation, we are able to say when a Riemannian metric satisfies a given curvature condition.

Furthermore, we consider subsets of a vector space which are invariant under an ordinary differential equation of the form $f'(t)=\Phi(f(t))$ and give a characterization of these in terms of their tangent cones. 

\subsection{Curvature conditions} \label{secCurvCond}

Let $(M,g)$ be an $n$-dimensional  Riemannian manifold and $O^g$ the orthonormal frame bundle on $(M,g)$. There is a left-action of $O(n)$ on the space of algebraic curvature tensors, namely the representation 
\begin{align*}
\rho: O(n) \rightarrow \mathrm{End}(\A)
\end{align*}
of $O(n)$ on $\A = S^2_B(\Lambda^2(\R^n)^*)$ given by
\begin{align*}
(\rho(Q)R)(v \wedge w, y\wedge z) := R(Q^{-1}v \wedge Q^{-1}w, Q^{-1}y \wedge Q^{-1}z),
\end{align*}
where $Q \in O(n)$, $R \in \A$ and $v, w, y, z \in \R^n$. 
\medskip

A subset $\Omega \subseteq \A$ is called \textit{$O(n)$-invariant}, if for all $R \in \Omega$ we have that
\begin{align*}
\rho(Q)R \in \Omega
\end{align*}
for every $Q \in O(n)$.
To an $O(n)$-invariant subset $\Omega \subseteq \A$, we associate the subset $\Omega^g \subseteq S^2_B(\Lambda^2T^*\!M)$ defined by
\begin{align*}
\Omega^g := \left\{ R \in S^2_B(\Lambda^2T^*\!M) \mid p^*R \in \Omega \ \text{for some} \ p \in O^g_{\pi(R)} \right\},
\end{align*}
where $\pi: S^2_B(\Lambda^2T^*\!M) \rightarrow M$ is the projection map and $p^*R \in \A$ is the pullback of $R$ along $p$, i.e.\
\begin{align*}
(p^*R)(v \wedge w, x \wedge y) := R(p(v) \wedge p(w), p(x) \wedge p(y))
\end{align*}
for $v,w,x,y \in \R^n$. This is well-defined due to the $O(n)$-invariance of $\Omega$.
\medskip

Note that $\Omega^g$ is invariant under parallel transport by $\nabla^g$. Furthermore, if $\Omega$ is open respectively closed, $\Omega^g$ is open respectively closed as well.

\begin{deff}
Let $\Omega \subseteq \A$ be $O(n)$-invariant. We say that \textit{$g$ satisfies $\Omega$}, if for all $x \in M$, we have that
\begin{align*}
Rm_g(x) \in \Omega^g.
\end{align*}
Therefore, we often call such a set $\Omega$ \textit{a curvature condition}.
\end{deff}

\begin{deff} \label{invariantUnderRF}
We say that an $O(n)$-invariant set $\Omega \subseteq \A$ is \textit{invariant under the Ricci flow}, if for all $n$-dimensional compact manifolds $M$ and solutions $g_t$, $t \in [0,T)$, to the Ricci flow on $M$ with $g_0$ satisfying $\Omega$, we have that $g_t$ satisfies $\Omega$ for all $t \in [0,T)$.
\end{deff}


\subsection{Properties of tangent cones}

For closed subsets $C$ of a metric space with smooth boundary $\partial C$, we can linearly approximate the submanifold $\partial C$ at some point $x_0 \in \partial C$, namely by the tangent space $T_{x_0}\partial C$. If the boundary of $C$ is not smooth this concept fails. However, tangent cones of such subsets generalize this notion to arbitrary regularity of the boundary. In this section, we show some properties of tangent cones, which, in the non-smooth case, involves approximating the boundaries of the subsets by certain submanifolds, so-called supporting submanifolds.
\medskip

\begin{deff} \label{DefTangentcone}
Let $V$ be a metric space, $C \subseteq V$ be a closed subset and $x_0 \in C$. The \textit{tangent cone of $C$ at $x_0$} is defined by
\begin{align*}
T_{x_0}C := \overline{\{ \dot{\gamma}(0) \mid \gamma: (-\epsilon, \epsilon) \rightarrow V \ \text{in} \ \mathcal{C}^1 \ \text{with} \ \gamma(0)=x_0 \ \text{and} \ \gamma(t) \in C \ \text{for all} \ t \in [0, \epsilon)  \}}.
\end{align*}
\end{deff}

From now on, let $V$ be a Euclidean vector space with induced norm $\|\cdot\|$ and let $C \subseteq V$ be a closed subset.

\begin{lem}
Let $x_0 \in C$. Then we have that
\begin{align*}
T_{x_0}C \subseteq K_{x_0}C := \{ v \in V \mid \forall x\in V \ \text{with} \ d(x,C) = \|x_0-x\|: \langle x-x_0,v \rangle \leq 0 \}.
\end{align*}
\end{lem}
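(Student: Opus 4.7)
The plan is to reduce to the defining case of $T_{x_0}C$ (actual tangent vectors of curves into $C$) and then use closedness of $K_{x_0}C$ to pass to the closure. The core of the argument is a one-parameter minimization: if $x_0$ realizes the distance from some external point $x$ to $C$, then any curve $\gamma$ entering $C$ at $x_0$ must move $\gamma(t)$ away from $x$, at least to first order.

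First I would observe that $K_{x_0}C$ is closed, since for each admissible $x$ the condition $\langle x - x_0, v \rangle \leq 0$ cuts out a closed half-space (or all of $V$, if $x = x_0$), and $K_{x_0}C$ is the intersection of these half-spaces. Thus it suffices to show that every $v$ of the form $v = \dot\gamma(0)$, with $\gamma \colon (-\epsilon,\epsilon) \to V$ a $\mathcal{C}^1$ curve satisfying $\gamma(0) = x_0$ and $\gamma(t) \in C$ for $t \in [0,\epsilon)$, already lies in $K_{x_0}C$; the general case then follows by taking limits.

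Fix such a curve $\gamma$ and fix $x \in V$ with $d(x,C) = \|x - x_0\|$. Since $\gamma(t) \in C$ for $t \in [0,\epsilon)$, the definition of the distance gives
\begin{align*}
\|x - \gamma(t)\|^2 \geq d(x,C)^2 = \|x - x_0\|^2 = \|x - \gamma(0)\|^2
\end{align*}
for all $t \in [0,\epsilon)$. Hence the smooth function $\varphi(t) := \|x - \gamma(t)\|^2$ attains a minimum on $[0,\epsilon)$ at $t = 0$, so its right derivative there is nonnegative. Computing,
\begin{align*}
0 \leq \varphi'(0) = 2 \langle \gamma(0) - x, \dot\gamma(0) \rangle = -2 \langle x - x_0, v \rangle,
\end{align*}
which gives $\langle x - x_0, v \rangle \leq 0$. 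Since $x$ was an arbitrary admissible point, $v \in K_{x_0}C$.

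There is no real obstacle here; the only subtlety is making sure that the closure in the definition of $T_{x_0}C$ is handled correctly, which is taken care of by the closedness of $K_{x_0}C$ noted above. The geometric content is exactly the intuition that a vector $x - x_0$ pointing from $C$ to its nearest point must be an outward normal direction, so any inward tangent vector must have nonpositive pairing with it.
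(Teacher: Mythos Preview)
Your proof is correct and follows essentially the same approach as the paper: both show that for curve-type tangent vectors the function $t \mapsto \|x-\gamma(t)\|^2$ cannot decrease at $t=0$ (you via a direct minimum/derivative argument, the paper via the equivalent contradiction using the first-order Taylor expansion), and both then pass to the closure using that $K_{x_0}C$ is closed. One cosmetic remark: $\gamma$ is only $\mathcal{C}^1$, so $\varphi$ is $\mathcal{C}^1$ rather than ``smooth'', but this is irrelevant since you only use $\varphi'(0)$.
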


\begin{proof}
For $x_0$ being in the interior of $C$, we have that $K_{x_0}C = V$. Hence, in this case the statement is trivial. Now let $x_0 \in \partial C$ and $v \in T_{x_0}C$ such that $v=\dot{\gamma}(0)$, where $\gamma: (-\epsilon, \epsilon) \rightarrow V$ is a $\mathcal{C}^1$-curve with $\gamma(0)=x_0$ and $\gamma(t) \in C$ for all $t \in [0, \epsilon)$. Let $x \in V$ with $d(x,C) = \|x_0-x\|$. If we had that $\langle x-x_0,v \rangle >0$, then for $t>0$ small enough, we would find that
\begin{align*}
\|x-\gamma(t)\|^2 &= \| x - \gamma(0) - t\dot{\gamma}(0) + o(t) \|^2 = \|x-x_0\|^2 - 2t \underbrace{\langle x-x_0,v \rangle}_{>0} + o(t) < \|x-x_0\|^2 ,
\end{align*}
in contradiction to $\|x-x_0\| = d(x,C) \leq \|x-\gamma(t)\|$ since $\gamma(t) \in C$. Thus, $\langle x-x_0,v \rangle \leq 0$ and therefore $v \in K_{x_0}C$. The statement follows since $K_{x_0}C$ is closed.
\end{proof}

If the boundary $\partial C$ of $C$ is smooth and of codimension one, then for $x_0 \in \partial C$, we have that
\begin{align*}
T_{x_0}C = \{v \in V \mid \langle v, \mathbf{n}_{x_0} \rangle \leq 0 \},
\end{align*}
where $\mathbf{n}_{x_0}$ denotes the outward pointing unit normal on $\partial C$ at $x_0$.
If the boundary of $C$ is of lower regularity, approximating it pointwise by certain  submanifolds of $V$, a similar, however slightly weaker, result is true (see Lemma \ref{tangentconeinhalfspacessm}). To this end, we introduce the notion of a supporting submanifold.

\begin{deff}
Let $x_0 \in \partial C$. A \textit{supporting submanifold of $C$ in $x_0$} is a  submanifold $N$ of $V$ of codimension one that touches $C$ in $x_0$ such that $C$ locally lies on one side of $N$, meaning that there is an open neighborhood $U \subseteq V$ of $x_0$ such that $U \setminus N$ consists of exactly two connected components $U_1$ and $U_2$, the closure of one of those (say $U_1$) containing $C \cap \overline{U}$. \\
Moreover, by $r^N$, we will always denote a \textit{signed distance function from a supporting submanifold $N$ of $C$ in $x_0$}, i.e.\ a function
\begin{align*}
r^N: U \rightarrow \R: x \mapsto
&\left.\begin{cases}
- d(x,N), & x \in U_1 \\
d(x,N), & x \in U_2 
\end{cases} \right\} = 
\begin{cases}
- d(x,N), & x \ \text{lies on the side of} \ C \\
d(x,N), & \text{else}.
\end{cases} 
\end{align*}
By possibly making $U$ smaller, we can always arrange $r^N$ to be smooth, which we will assume throughout.
\end{deff}

\begin{lem} \label{tangentconeinhalfspacessm}
Let $x_0 \in \partial C$ and $N$ be a supporting submanifold of $C$ in $x_0$. Then we have that
\begin{align*}
T_{x_0}C \subseteq \{ v \in V \mid \langle v, \mathbf{n}_{x_0}  \rangle \leq 0 \} =: H_N,
\end{align*}
where $\mathbf{n}_{x_0}$ denotes the unit normal on $N$ at $x_0$ pointing in the opposite direction of $C$. In particular, the tangent cone $T_{x_0}C$ lies on one side of the tangent space $T_{x_0}N$.
\end{lem}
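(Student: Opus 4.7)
The plan is to reduce the assertion to a one-variable calculus fact by using the signed distance function $r^N$ as a smooth defining function for $C$ near $x_0$. By definition, $r^N$ is smooth on a neighborhood $U$ of $x_0$, vanishes on $N \cap U$, is non-positive on the connected component $U_1$ containing $C \cap \overline{U}$, and is non-negative on $U_2$. Since $r^N$ measures signed Euclidean distance from a smooth hypersurface and increases in the direction pointing away from $C$, a standard computation gives $r^N(x_0)=0$ and $\nabla r^N(x_0)=\mathbf{n}_{x_0}$.

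Next, I would take an arbitrary $v \in T_{x_0}C$ of the form $v = \dot{\gamma}(0)$ for some $\mathcal{C}^1$-curve $\gamma\colon(-\epsilon,\epsilon)\to V$ with $\gamma(0)=x_0$ and $\gamma(t)\in C$ for $t\in[0,\epsilon)$. By continuity and openness of $U$, we may assume (after shrinking $\epsilon$) that $\gamma(t)\in U$ for all $t\in[0,\epsilon)$, so that $\gamma(t)\in C\cap \overline{U}\subseteq \overline{U_1}$. Consequently $r^N(\gamma(t))\leq 0$ for $t\in[0,\epsilon)$ while $r^N(\gamma(0))=0$. The one-sided chain rule then yields
\begin{align*}
0 \;\geq\; \frac{d}{dt}\Big|_{t=0^+} r^N(\gamma(t)) \;=\; \langle \nabla r^N(x_0),\dot{\gamma}(0)\rangle \;=\; \langle \mathbf{n}_{x_0}, v\rangle,
\end{align*}
so $v\in H_N$.

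Finally, since $H_N$ is closed and $T_{x_0}C$ was defined as the closure of the set of such velocities $\dot{\gamma}(0)$, the inclusion $T_{x_0}C\subseteq H_N$ follows by passing to the closure. The additional statement that $T_{x_0}C$ lies on one side of $T_{x_0}N$ is then immediate, as $H_N$ is precisely the closed half-space bounded by the hyperplane $\mathbf{n}_{x_0}^{\perp}=T_{x_0}N$.

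The only genuinely delicate point is the identification $\nabla r^N(x_0)=\mathbf{n}_{x_0}$; everything else is a direct application of the first-order Taylor expansion. This is, however, a well-known property of signed distance functions from smooth hypersurfaces and can be verified by writing $N$ locally as a graph over $T_{x_0}N$ and computing the nearest-point projection, so it presents no real obstacle.
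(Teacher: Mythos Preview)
Your proof is correct and follows essentially the same approach as the paper: compose the signed distance function $r^N$ with an admissible curve $\gamma$, differentiate at $t=0$ via the chain rule using $\nabla r^N(x_0)=\mathbf{n}_{x_0}$, and then pass to the closure. The only differences are cosmetic---you make the shrinking of $\epsilon$ and the identification of the gradient explicit, whereas the paper leaves these implicit.
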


\begin{proof}
Let $\gamma: (-\epsilon, \epsilon) \rightarrow V$ be once differentiable with $\gamma(0)=x_0$ and $\gamma(t) \in C$ for all $t \in [0,\epsilon)$. Then $\dot{\gamma}(0) \in T_{x_0}C \subseteq V$ and we have that $r^N(\gamma(t)) \leq 0$ for all $t \in [0,\epsilon)$ and $r^N(\gamma(0)) = 0$. Hence,
\begin{align*}
0 \geq \frac{d}{dt}\bigg{|}_{t=0} r^N(\gamma(t)) = dr^N_{\gamma(0)}(\dot{\gamma}(0)) = \langle \mathbf{n}_R, \dot{\gamma}(0) \rangle.
\end{align*}
Passing to the closure yields that $T_{x_0}C \subseteq H_N$. Since the tangent space $T_{x_0}N$ is the boundary of the half space $H_N$, the tangent cone $T_{x_0}C$ lies on one side of $T_{x_0}N$. 
\end{proof}


\subsection{Invariance under an ordinary differential equation}

Let $V$ be a vector space and $\Phi: V \rightarrow V$ a locally Lipschitz continuous map. A subset $C \subseteq V$ is \textit{invariant under the ordinary differential equation} 
\begin{align} \label{allgODE}
f'(t) = \Phi(f(t)),
\end{align}
if for all solutions $f: [0,\delta] \rightarrow V$ of \eqref{allgODE} with $f(0) \in C$, we have that $f(t) \in C$ for all $t \in [0,\delta]$.
\medskip

In our applications, we will always have that $V= \A$ and that $\Phi: \A \rightarrow \A$ is the quadratic $O(n)$-equivariant map given by 
\begin{align*}
\Phi(R) := R^2 + R^{\#}
\end{align*}
for all $R \in \A$. 
\medskip

The following proposition is somewhat more general than Hamilton's statement in \cite[Lemma 4.1]{4-mfdsHamilton} since the convexity assumption is not required. 

\begin{prop} \label{ODETangkegel}
Let $V$ be a normed vector space, $\Phi: V \rightarrow V$ locally Lipschitz continuous and $C \subseteq V$ a closed set. Then $C$ is invariant under the ordinary differential equation \eqref{allgODE} if and only if for all $v \in \partial C$ we have that $\Phi(v) \in T_vC$.
\end{prop}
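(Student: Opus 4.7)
The forward implication ($\Rightarrow$) is essentially tautological. If $C$ is invariant and $v \in \partial C$, consider the solution $f: [0,\delta] \to V$ of $f'(t) = \Phi(f(t))$ with $f(0)=v$; this exists by Picard-Lindel\"of. By assumption $f(t) \in C$ for all $t \in [0,\delta]$, and $f$ is $C^1$ with $\dot f(0) = \Phi(v)$, so $\Phi(v)$ is the derivative at $0$ of a $C^1$-curve in $V$ starting at $v$ and lying in $C$ for $t \geq 0$. By Definition~\ref{DefTangentcone} this means $\Phi(v) \in T_v C$.

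For the backward implication ($\Leftarrow$), the plan is to introduce the distance function $\varphi(t) := d(f(t), C)$ for a fixed solution $f: [0,\delta] \to V$ with $f(0) \in C$, and show $\varphi \equiv 0$ via a Gronwall-type argument on $\varphi^2$. The core estimate I want to establish is
\begin{align*}
\limsup_{h \searrow 0}\frac{\varphi(t+h)^2 - \varphi(t)^2}{h} \leq 2L\,\varphi(t)^2,
\end{align*}
where $L$ is a Lipschitz constant of $\Phi$ on a bounded neighbourhood containing $f([0,\delta])$. Given this Dini-derivative inequality together with $\varphi(0) = 0$ and continuity of $\varphi$, a standard Gronwall argument forces $\varphi \equiv 0$, hence $f(t) \in C$ throughout.

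To prove the key estimate when $\varphi(t) > 0$, pick a closest point $p \in C$ with $\|f(t) - p\| = \varphi(t)$ (existence is immediate in the finite-dimensional case $V = \A$ of actual interest; in a general normed space one instead works with $\epsilon$-minimizers and passes $\epsilon \to 0$). Such a $p$ must lie in $\partial C$, otherwise one could move it slightly toward $f(t)$ and strictly decrease the distance. The preceding lemma giving $T_pC \subseteq K_pC$ yields $\langle f(t) - p, w \rangle \leq 0$ for every $w \in T_pC$; applied to $w = \Phi(p)$ (which lies in $T_pC$ by hypothesis) this gives $\langle f(t) - p, \Phi(p)\rangle \leq 0$. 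Using the expansion $f(t+h) = f(t) + h\Phi(f(t)) + o(h)$ and the Lipschitz bound $\|\Phi(f(t)) - \Phi(p)\| \leq L\,\varphi(t)$, an expansion of $\|f(t+h) - p\|^2$ gives
\begin{align*}
\varphi(t+h)^2 \leq \|f(t+h) - p\|^2 \leq \varphi(t)^2 + 2h\langle f(t)-p,\Phi(p)\rangle + 2hL\varphi(t)^2 + o(h),
\end{align*}
and the first inner product is nonpositive. In the case $\varphi(t) = 0$, one has $f(t) \in C$, and if $f(t) \in \partial C$ then $\Phi(f(t)) \in T_{f(t)}C$ lets one choose, for each $\varepsilon > 0$, a $C^1$-curve $\gamma$ in $C$ with $\gamma(0) = f(t)$ and $\|\dot\gamma(0) - \Phi(f(t))\| < \varepsilon$; comparing $f(t+h)$ with $\gamma(h)$ yields $\varphi(t+h) \leq \varepsilon h + o(h)$, and letting $\varepsilon \to 0$ gives $\varphi(t+h) = o(h)$, which is even stronger than the claimed inequality.

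The main obstacle is bookkeeping two technicalities: the existence of a nearest point in $C$, which is immediate in the finite-dimensional setting that matters but in general requires an $\varepsilon$-approximation argument, and the Dini-derivative version of Gronwall needed to upgrade the pointwise one-sided inequality on $\varphi^2$ into $\varphi^2 \leq \varphi(0)^2 e^{2Lt} = 0$. Both are standard once one has the correct geometric input from the tangent cone condition, and the lemma $T_pC \subseteq K_pC$ proved earlier in the paper is precisely what couples the tangent cone hypothesis to the distance function $\varphi$.
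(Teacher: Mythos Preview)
Your argument is correct and follows essentially the same route as the paper's proof: both directions are handled identically in spirit, the backward implication proceeds via a Dini-derivative Gronwall estimate on the squared distance $s(t)=d(f(t),C)^2$, and the key geometric input is the inclusion $T_pC \subseteq K_pC$ applied at a nearest point to get $\langle f(t)-p,\Phi(p)\rangle \leq 0$. The only cosmetic difference is that the paper treats the cases $\varphi(t)>0$ and $\varphi(t)=0$ uniformly (when $f(t)\in C$ one simply takes $p=f(t)$ and the estimate reads $s'(t)\leq 0$), so your separate curve-approximation argument for $\varphi(t)=0$ is valid but unnecessary.
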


\begin{proof}
First, assume that $C$ is invariant under \eqref{allgODE}. Let $v \in \partial C$ and $f: [0,\delta] \rightarrow V$ be a solution of \eqref{allgODE} with $f(0)=v$. By the theorem of Picard-Lindelöf, $f$ is defined on the interval $(-\epsilon,\epsilon)$ for an $\epsilon \in(0,\delta)$ as well. Moreover, $f'(0) = \Phi(v)$ and the invariance of $C$ under \eqref{allgODE} yields that $f(t) \in C$ for all $t \in [0,\delta]$. Hence, $\Phi(v) \in T_vC$.
\medskip

In order to show the opposite direction, let $f: [0,\delta] \rightarrow V$ be a solution of \eqref{allgODE} with $f(0)\in C$. Let $r:V\rightarrow [0,\infty)$ be the distance function from $C$, i.e.\ for $v \in V$ let
\begin{align*}
r(v) := d(v,C) = \inf_{w\in C}\|v-w\|.
\end{align*}
Moreover, for $t \in [0,\delta]$ we set
\begin{align*}
s(t) := r(f(t))^2.
\end{align*}
In general, the function $s: [0,\delta] \rightarrow [0,\infty)$ is not differentiable. Still we can define
\begin{align*}
s'(t) := \limsup_{h \searrow 0} \frac{s(t+h) - s(t)}{h} < \infty
\end{align*}
for $t \in [0,\delta)$, since $r$ is Lipschitz continuous and $f$ is once continuously differentiable. Let $r_0$ be the maximum of $s$ on $[0,\delta]$. Then
\begin{align*}
K:= \bigcup_{t \in [0,\delta]}B_{\sqrt{r_0}}(f(t))
\end{align*}
is compact. Since $\Phi$ is locally Lipschitz continuous, there exists a constant $L>0$ such that $\Phi |_K$ is $\frac{L}{2}$-Lipschitz continuous. Our goal is to show that $s'(t) \leq L s(t)$ for all $t \in [0,\delta)$. Because then for 
\begin{align*} 
g: [0,\delta] \rightarrow [0,\infty): t \mapsto s(t)e^{-Lt},
\end{align*}
we find that $g'(t) \leq e^{-Lt}(s'(t)-Ls(t)) \leq 0$ for all $t \in [0,\delta)$ and $g(0)=0$. Therefore, $g(t) \leq 0$ for all $t \in [0,\delta]$, hence $s(t) \leq 0$ for all $t \in [0,\delta]$. Since $s$ is non-negative, however, this means that $s \equiv 0$, which yields that $f(t) \in C$ for all $t \in [0,\delta]$.\\
Let now $t \in [0,\delta)$. Since $C$ is closed, there is an $x_t \in C$ with $d(f(t), C) = \|f(t) - x_t\|$. By assumption, $\Phi(x_t) \in T_{x_t}C \subseteq K_{x_t}C$, thus $\langle f(t)-x_t, \Phi(x_t) \rangle \leq 0$. Consequently,
\begin{align*}
s'(t) &= \limsup_{h \searrow 0} \frac{d(f(t+h),C)^2 - d(f(t),C)^2}{h} \leq  \limsup_{h \searrow 0} \frac{\|f(t+h)-x_t\|^2-\|f(t)-x_t\|^2}{h} \\
&= \limsup_{h \searrow 0} \frac{\|f(t+h)\|^2 - \|f(t)\|^2 - 2 \langle f(t+h) -f(t), x_t\rangle}{h} \\
&= \frac{d}{dt}\|f(t)\|^2 -2 \langle f'(t), x_t \rangle \stackrel{\eqref{allgODE}}{=} 2 \langle f'(t), f(t) \rangle - 2 \langle \Phi(f(t)), x_t \rangle \stackrel{\eqref{allgODE}}{=} 2 \langle \Phi(f(t)), f(t) - x_t \rangle \\
&\leq 2 \langle \Phi(f(t)), f(t) - x_t \rangle - 2 \langle  \Phi(x_t), f(t)-x_t \rangle = 2 \langle \Phi(f(t)) - \Phi(x_t), f(t)-x_t \rangle \\
&\leq 2 \|\Phi(f(t)) - \Phi(x_t)\| \|f(t)-x_t\| \leq L \|f(t)-x_t\|^2 = Ls(t),
\end{align*}
where the last inequality holds, since
\begin{align*}
\|x_t-f(t)\|^2 = s(t) \leq r_0,
\end{align*} 
thus $x_t \in B_{\sqrt{r_0}}(f(t)) \subseteq K$.
\end{proof}


\section{Bianchi-convex sets}

In this chapter, we introduce Bianchi-convex sets of algebraic curvature tensors and show some first properties. Bianchi-convexity relaxes the notion of convexity in a certain sense inspired by the second Bianchi identity for the Riemannian curvature tensor of a Riemannian manifold. In dimension three, we consider Bianchi-convex sets of algebraic curvature tensors whose eigenvalues lie in a sublevel set of some function $f: \R^3 \rightarrow \R$ and give another characterization of Bianchi-convexity for those sets in terms of $f$. This enables us to find examples for Bianchi-convex sets which are not convex, whichs shows that the introduced notion is a real generalization of convexity. Furthermore, certain subsets of these Bianchi-convex sets are invariant under the ordinary differential equation \eqref{ODEeinleitung}. 

\subsection{The definition and first properties}

First of all, recall that for a submanifold $N$ of codimension one of a Riemannian manifold $(M,g)$ and a point $x \in M$, given the choice of a normal vector $\mathbf{n}_x$ at $x$, the \textit{second fundamental form of $N$ in $x$} is defined as the symmetric and bilinear map
\begin{align*}
\text{\Gemini}^N_x: T_xN \times T_xN \rightarrow \R: (X,Y) \mapsto g_x( \nabla^g_XY, \mathbf{n}_x ),
\end{align*}
where $\nabla^g$ is the Levi-Civita connection of $M$. For $X, Y \in T_xN$, one can show that 
\begin{align} \label{nablaMitMinusRüberholen}
g_x( \nabla^g_XY, \mathbf{n}_x ) = - g_x(Y,\nabla^g_X\mathbf{n}).
\end{align}
Here, $\mathbf{n}$ denotes an extension of $\mathbf{n}_x$ to a neighborhood of $x$ in $M$.

\begin{bem} \label{convexnegsemidef}
Let $V$ be a vector space and $C \subseteq V$ a closed convex set, the boundary $\partial C$ of which is smooth and of codimension one. If one chooses $\mathbf{n}_x$ to be the \textit{outward} pointing unit normal on $\partial C$ at $x$, then ${\text{\Gemini}}^{\partial C}_x$ is negative semidefinite for all $x \in \partial C$.
\end{bem}

Our generalization of the notion of convexity requires a second Bianchi identity for tuples of algebraic curvature tensors.

\begin{deff}
An $n$-tuple $(T_1, \dots, T_n) \in \A^n$ \textit{satisfies the second Bianchi identity}, if for some orthonormal basis $(b_1, \dots, b_n)$ of $\R^n$, we have that
\begin{align*}
T_i(b_j \wedge b_k) + T_j(b_k \wedge b_i) + T_k(b_i \wedge b_j) = 0
\end{align*}
for all $i,j,k \in \{1, \dots, n\}$. Moreover, we can replace $\A$ by $S^2_B(\Lambda^2T^*_xM)$ and $\R^n$ by $T_xM$ for some $x \in M$.
\end{deff}

\begin{bsp}
Let $(M,g)$ be an $n$-dimensional Riemannian manifold, $x \in M$ and $(b_1, \dots, b_n)$ an orthonormal basis of $T_xM$. Then $(T_1, \dots, T_n)$, where $T_i := \nabla_{b_i} Rm_g$ for $i=1, \dots, n$, satisfies the second Bianchi identity with respect to $(b_1, \dots, b_n)$.
\end{bsp}

For closed subsets of algebraic curvature tensors, we introduce a weaker form of convexity.

\begin{deff} \label{DefBianchiconvexSet}
A closed subset $\Omega \subseteq \A$ is called \textit{Bianchi-convex}, if for all $\epsilon > 0$ and $R \in \partial\Omega$ there is a supporting submanifold $N$ of $\Omega$ in $R$ such that for each $S \in N$ and $(T_1, \dots, T_n) \in (T_SN)^n$ satisfying the second Bianchi identity, we have that
\begin{align} \label{glgallgBCsets}
\sum_{i=1}^n \text{\Gemini}^{N}_S (T_i,T_i) \leq \epsilon \sum_{i=1}^n \|T_i\|^2.
\end{align}
Furthermore, we can replace $\A$ by $S^2_B(\Lambda^2T^*_xM)$ for some $x \in M$.
\end{deff}

If the boundary of $\Omega$ is smooth, then the supporting submanifolds in the Definition \ref{DefBianchiconvexSet} can be chosen to be $\partial\Omega$ itself, and we obtain that $\Omega$ is Bianchi-convex, if and only if for all $R \in \partial \Omega$ and $(T_1, \dots, T_n) \in (T_R\partial \Omega)^n$ satisfying the second Bianchi identity, we have that
\begin{align*}
\sum_{i=1}^n \text{\Gemini}_R^{\partial \Omega}(T_i,T_i) \leq 0.
\end{align*}

Roughly speaking, in order for a set of algebraic curvature tensors to be Bianchi-convex, concavity is permitted in certain directions as long as these directions are compensated by the convex ones.

In particular, closed convex subsets of $\A$ are Bianchi-convex (c.f.\ Remark \ref{convexnegsemidef}).

\begin{lem} \label{SchnitteWiederBkonvex}
The intersection of two Bianchi-convex sets is Bianchi-convex.
\end{lem}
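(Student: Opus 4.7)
My plan is to show that for any boundary point $R \in \partial(\Omega_1 \cap \Omega_2)$ and any $\epsilon > 0$, a supporting submanifold of one of the two sets (whichever has $R$ on its boundary) serves as a supporting submanifold of the intersection, and that the Bianchi inequality, being purely a statement about that submanifold, transfers for free.

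First I would observe that if $R \in \partial(\Omega_1 \cap \Omega_2)$, then $R$ cannot lie in the interior of both $\Omega_i$, so after possibly relabeling I may assume $R \in \partial \Omega_1$. (Note in particular that $R \in \Omega_2$ since $R$ belongs to the closed set $\Omega_1 \cap \Omega_2$.) By Bianchi-convexity of $\Omega_1$, for the given $\epsilon > 0$ there exists a supporting submanifold $N$ of $\Omega_1$ at $R$ with the property that for every $S \in N$ and every $(T_1, \dots, T_n) \in (T_S N)^n$ satisfying the second Bianchi identity,
\begin{align*}
\sum_{i=1}^n \text{\Gemini}^{N}_S(T_i, T_i) \leq \epsilon \sum_{i=1}^n \|T_i\|^2.
\end{align*}

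Next I would verify that $N$ is a supporting submanifold of $\Omega_1 \cap \Omega_2$ at $R$. By definition there is a neighborhood $U$ of $R$ in $V$ such that $U \setminus N = U_1 \sqcup U_2$ with $\Omega_1 \cap \overline{U} \subseteq \overline{U_1}$. If $R$ happens to lie in the interior of $\Omega_2$, I shrink $U$ so that $U \subseteq \mathrm{int}(\Omega_2)$; in either case, $(\Omega_1 \cap \Omega_2) \cap \overline{U} \subseteq \Omega_1 \cap \overline{U} \subseteq \overline{U_1}$, so $\Omega_1 \cap \Omega_2$ lies locally on one side of $N$. Since $R \in N \cap \Omega_1 \cap \Omega_2$, this makes $N$ a supporting submanifold of the intersection at $R$ (the symmetric argument applies when $R \in \partial \Omega_2$ instead).

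Finally, the Bianchi-type inequality in Definition \ref{DefBianchiconvexSet} is a condition attached to the submanifold $N$ itself (through its second fundamental form, tangent spaces, and the second Bianchi identity in $\mathcal{A}_n$), and makes no reference to which set $N$ is supporting. Hence the inequality that was guaranteed because $N$ was a supporting submanifold of $\Omega_1$ is exactly the inequality required of $N$ as a supporting submanifold of $\Omega_1 \cap \Omega_2$. There is no real obstacle here; the only point that requires a moment's thought is checking that the "local one-sidedness" condition survives intersection, which is immediate from monotonicity of set inclusion.
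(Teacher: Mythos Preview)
Your proof is correct and follows essentially the same approach as the paper: pick the $\Omega_i$ whose boundary contains $R$, take its supporting submanifold $N$, observe that $N$ also supports the intersection (by the inclusion $(\Omega_1\cap\Omega_2)\cap\overline{U}\subseteq\Omega_1\cap\overline{U}\subseteq\overline{U_1}$), and note that the Bianchi inequality is intrinsic to $N$. You spell out the supporting-submanifold verification in slightly more detail than the paper does, and your shrinking of $U$ when $R\in\mathrm{int}(\Omega_2)$ is harmless but unnecessary, since the displayed inclusion already holds without it.
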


\begin{proof}
Let $\Omega, \Omega' \subseteq \A$ be Bianchi-convex sets. Let $\epsilon > 0$ and $R \in \partial(\Omega \cap \Omega')$. 
If $R \in \partial \Omega \cap \Omega' \subseteq \partial\Omega$, then, since $\Omega$ is Bianchi-convex, there exists a supporting submanifold $N$ of $\Omega$ in $R$ such that \eqref{glgallgBCsets} is true for each $S \in N$ and $(T_1, \dots, T_n) \in (T_SN)^n$ that satisfies the second Bianchi identity. Since $N$ is also a supporting submanifold of $\Omega \cap \Omega'$ in $R$, we are done in this case.
The case that $R \in \partial \Omega' \cap \Omega \subseteq \partial\Omega'$ works analogously. \qedhere
\end{proof}

The following lemma is a crucial step towards generalizing Hamilton's maximum principle \cite[Theorem 4.3]{4-mfdsHamilton} to the Bianchi-convex setting.

\begin{lem} \label{laplacezeigtnachinnen}
Let $(M,g)$ be an $n$-dimensional Riemannian manifold, $C \subseteq S^2_B(\Lambda^2T^*\!M)$ be closed, invariant under parallel transport with respect to the Levi-Civita connection $\nabla^g$ and fibrewise Bianchi-convex. Let $R \in \Gamma(M,C)$ be a smooth section of $C$ and $R(x) \in \partial C_x$ for some point $x \in M$. Moreover, assume that $(\nabla^g_{b_1}R, \dots, \nabla^g_{b_n}R)$ satisfies the second Bianchi identity with respect to some orthonormal basis $(b_1, \dots, b_n)$ of $T_xM$. Let further $\epsilon > 0$ and $N$ be a supporting submanifold of $C_x$ in $R(x)$ satisfying \eqref{glgallgBCsets}. Then we have that
\begin{align*}
\left\langle (\Delta_g R)(x), \mathbf{n}_{R(x)} \right\rangle_g \leq \epsilon \sum_{i=1}^n \| \nabla^g_{b_i}R \|_g^2,
\end{align*}
where $\mathbf{n}_{R(x)}$ denotes the  unit normal on $N$ at $R(x)$ pointing in the opposite direction of $C_x$.
\end{lem}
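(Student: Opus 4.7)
The strategy is to trivialize the bundle along the $n$ geodesics $\gamma_i$ via parallel transport, reducing the statement to a one-variable calculus fact in the Euclidean vector space $V := S^2_B(\Lambda^2 T_x^* M)$. For each $i = 1, \dots, n$ let $\gamma_i$ be the $g$-geodesic with $\gamma_i(0) = x$ and $\dot{\gamma}_i(0) = b_i$, let $P_s^i : S^2_B(\Lambda^2 T_{\gamma_i(s)}^* M) \to V$ denote parallel transport back to $x$ along $\gamma_i$, and set $R_i(s) := P_s^i R(\gamma_i(s))$. By the defining property of the covariant derivative, $R_i'(0) = \nabla^g_{b_i} R =: T_i$; since $\gamma_i$ is a geodesic, $R_i''(0)$ coincides with $\tfrac{(\nabla^g)^2}{ds^2}\big|_{s=0} R(\gamma_i(s))$, so that $\sum_i R_i''(0) = (\Delta_g R)(x)$ by the formula for $\Delta_g$ stated after Lemma~\ref{evolutionRmallg}. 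Because $C$ is invariant under parallel transport and $R(\gamma_i(s)) \in C_{\gamma_i(s)}$, each curve $R_i$ takes values in $C_x$.

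The next step is to feed the curves $R_i$ into the signed distance function $r^N$ of the supporting submanifold $N$ of $C_x$ at $R(x)$: it is smooth near $R(x)$, vanishes at $R(x)$, and is non-positive on $C_x$. Hence $\varphi_i(s) := r^N(R_i(s))$ is a smooth real-valued function attaining a local maximum at $s = 0$, which forces $\varphi_i'(0) = 0$ and $\varphi_i''(0) \leq 0$. The vanishing of $\varphi_i'(0) = \langle \mathbf{n}_{R(x)}, T_i \rangle$ shows that each $T_i$ actually lies in $T_{R(x)} N$, which is precisely what is required to invoke the Bianchi-convex inequality. Combining $\varphi_i''(0) \leq 0$ with the identity $\Hess\, r^N(T, T) = -\text{\Gemini}^N_{R(x)}(T, T)$ for $T \in T_{R(x)} N$ (a direct consequence of $\nabla r^N = \mathbf{n}$ on $N$ together with equation \eqref{nablaMitMinusR�berholen}), an expansion via the chain rule yields
\begin{align*}
\langle \mathbf{n}_{R(x)}, R_i''(0) \rangle \leq \text{\Gemini}^N_{R(x)}(T_i, T_i).
\end{align*}

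Summing over $i$ and exploiting the hypothesis that $(T_1, \dots, T_n) = (\nabla^g_{b_1} R, \dots, \nabla^g_{b_n} R)$ satisfies the second Bianchi identity with respect to $(b_1, \dots, b_n)$, the Bianchi-convexity condition \eqref{glgallgBCsets} for $N$ at $S = R(x)$ together with the fact that parallel transport is an isometry (so that $\|T_i\| = \|\nabla^g_{b_i} R\|_g$) gives
\begin{align*}
\langle (\Delta_g R)(x), \mathbf{n}_{R(x)} \rangle_g = \sum_{i=1}^n \langle \mathbf{n}_{R(x)}, R_i''(0) \rangle \leq \sum_{i=1}^n \text{\Gemini}^N_{R(x)}(T_i, T_i) \leq \epsilon \sum_{i=1}^n \|\nabla^g_{b_i} R\|_g^2,
\end{align*}
which is the desired estimate. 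The only subtle point is verifying that each $T_i$ lands in $T_{R(x)} N$ so that the Bianchi-convex hypothesis applies at all; this comes for free from the extremality of the smooth function $\varphi_i$ at $s = 0$, and everything else is routine once the parallel transport trivialization is set up.
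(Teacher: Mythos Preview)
Your proof is correct and follows essentially the same approach as the paper: you parallel-transport $R$ along geodesics back to $x$ to obtain curves $R_i$ in $C_x$ (the paper calls these $h_i$), apply the signed distance function $r^N$, and use the first- and second-derivative tests at the local maximum $s=0$ together with the identity $\Hess\, r^N = -\text{\Gemini}^N$ on $T_{R(x)}N$ and the Bianchi-convexity hypothesis. The only cosmetic difference is notation; the logical structure is identical.
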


\begin{proof}
For $i=1, \dots, n$, let  $\gamma_i: (-\delta, \delta) \rightarrow M$ be geodesics with $\gamma_i(0)=x$ and $\dot{\gamma_i}(0) =b_i$. Since $C$ is invariant under parallel transport with respect to $\nabla^g$, for $i=1,\dots,n$ and $t \in (-\delta, \delta)$ we find that
\begin{align*}
h_i(t) := \left(P_{\gamma_i|_{[0,t]}}\right)^{-1} \underbrace{(R \circ \gamma_i)(t)}_{\in C_{\gamma_i(t)}} \in C_x, 
\end{align*} 
where $P_{\gamma_i|_{[0,t]}}$ denotes the parallel transport along $\gamma_i|_{[0,t]}$ with respect to $\nabla^g$. Hence, $h_i$ is a curve in $C_x$ with $h_i(0)=R(x) \in N$. Now, let $r^N$ be a signed distance function from $N$. Then 
\begin{align*}
(r^N \circ h_i)(0) &= r^N(R(x)) = 0, \\
\text{and} \ \ \ \ (r^N \circ h_i)(t) &\leq 0 \ \ \ \ \text{for all} \ t \in (-\delta,\delta).
\end{align*}
Therefore, $0$ is a local maximum of $r^N \circ h_i$. On the one hand, this implies that
\begin{align*}
0 = \frac{d}{dt}\bigg{|}_{t=0} (r^N \circ h_i)(t) = dr^N_{h_i(0)}(h_i'(0)) = \langle \grad_{h_i(0)}r^N, h_i'(0) \rangle = \left\langle \mathbf{n}_{h_i(0)}, h_i'(0) \right\rangle,
\end{align*}
thus 
\begin{align*}
\nabla^g_{b_i}R =\frac{\nabla^g}{dt}\bigg{|}_{t=0}R(\gamma_i(t)) =h_i'(0) \in T_{h_i(0)}N. 
\end{align*}
On the other hand, we obtain that
\begin{align*}
0 \geq \frac{d^2}{dt^2}\bigg{|}_{t=0} (r^N\circ h_i)(t) = \Hess_{h_i(0)}r^N\big(h_i'(0), h_i'(0)\big) + dr^N_{h_i(0)}(h_i''(0)).
\end{align*}
Since
\begin{align*}
\Hess_{h_i(0)}r^N\big(h_i'(0), h_i'(0)\big) &= \left\langle \nabla^g_{h_i'(0)}\grad \, r^N, h_i'(0) \right\rangle \stackrel{\eqref{nablaMitMinusRüberholen}}{=} - \left\langle \grad_{R(x)} r^N, \nabla^g_{h_i'(0)}h_i'(0) \right\rangle \\
&= - \left\langle \mathbf{n}_{R(x)},\nabla^g_{h_i'(0)}h_i'(0)  \right\rangle = - \text{\Gemini}^N_{R(x)}(h_i'(0), h_i'(0)),
\end{align*}
we find that
\begin{align} \label{unglg22fsp}
\text{\Gemini}^N_{R(x)}(h_i'(0), h_i'(0)) \geq \left\langle \mathbf{n}_{R(x)}, h_i''(0) \right\rangle,
\end{align}
which leads to
\begin{align*}
\left\langle (\Delta_g R)(x), \mathbf{n}_{R(x)} \right\rangle &= \sum_{i=1}^n \left\langle \frac{(\nabla^g)^2}{dt^2}\bigg{|}_{t=0}R(\gamma_i(t)), \mathbf{n}_{R(x)} \right\rangle = \sum_{i=1}^n \left\langle h_i''(0), \mathbf{n}_{R(x)} \right\rangle \\ 
&\stackrel{\eqref{unglg22fsp}}{\leq} \sum_{i=1}^n \text{\Gemini}^N_{R(x)}(h_i'(0), h_i'(0)) = \sum_{i=1}^n \text{\Gemini}^N_{R(x)}(\nabla^g_{b_i}R, \nabla^g_{b_i}R) \\
&\stackrel{\eqref{glgallgBCsets}}{\leq} \epsilon \sum_{i=1}^n \|\nabla^g_{b_i}R\|^2,
\end{align*}
where the last inequality holds since $(\nabla^g_{b_1}R, \dots, \nabla^g_{b_n}R) \in (T_{R(x)}N)^n$ satisfies the second Bianchi identity (with respect to the orthonormal basis $(b_1, \dots, b_n)$) and $C_x$ is Bianchi-convex.
\end{proof}


\subsection{Bianchi-convex sets in dimension three} \label{BconvexSetsInDim3}

In this section, we give an example of a Bianchi-convex set which is not convex. To this end, we give another characterization of Bianchi-convex sets which have the following concrete form.
\medskip

Let $f: \R^3 \rightarrow \R$ be a smooth function with $f^{-1}(0) \neq \emptyset$ such that 0 is a regular value of $f$. Let further $f$ be symmetric, meaning that $f(x_1,x_2,x_3)=f(x_{\sigma(1)},x_{\sigma(2)},x_{\sigma(3)})$ for all $x \in \R^3$ and all permutations $\sigma \in S_3$. We set
\begin{align*}
\Omega_f := \{ R \in \mathcal{A}_3 \mid f(\lambda(R)) \leq 0 \}.
\end{align*}
Here,
\begin{align*}
\lambda: \mathcal{A}_3 \rightarrow \R^3: R \mapsto (\lambda_1(R), \lambda_2(R), \lambda_3(R)),
\end{align*} 
where $\lambda_1(R) \leq \lambda_2(R) \leq \lambda_3(R)$ are the eigenvalues of $R$. Note that by the assumptions on $f$, the boundary of $\Omega_f$ is smooth.

\begin{bsp}
The following functions satisfy the properties mentioned above:
\begin{align*}
x &\mapsto x_1 + x_2 + x_3 \\
\text{or} \ \ x &\mapsto x_1^2 + x_2^2 + x_3^2 - a(x_1+x_2+x_3)^2 - c
\end{align*}
for $a \in \R$ and $c >0$.
\end{bsp}

The sublevel sets of a convex function are convex, as is well known. Since $\Omega_f$ is a sublevel set of $f \circ \lambda$, it is natural to ask whether $f$ being convex implies that $\Omega_f$ is already convex. Since by assumption $f$ is symmetric, the answer is yes: If $f$ is convex, then each connected component of $\Omega_f$ is convex. (For a proof see \cite[Lemma~3.32]{ThesisSFB}.) Requiring only Bianchi-convexity from $\Omega_f$ allows to relax the convexity condition; the following theorem was shown by the author in her PhD thesis \cite{ThesisSFB}, and it shows that $f$ is allowed to have a certain degree of concavity in some directions.

\begin{thm} \label{characterizationBconvexSets}
The set $\Omega_f$ is Bianchi-convex if and only if for all $\lambda \in f^{-1}(0)$ with $\lambda_1 < \lambda_2 < \lambda_3$ the following are true.
\begin{enumerate}
\item[1.)] $\partial_1f(\lambda) \leq \partial_2f(\lambda) \leq \partial_3f(\lambda)$ \text{and,}
\item[2.)] unless $Z_1=Z_2=Z_3=0$, we have for all $x \in \ker(df_{\lambda})$  that
\begin{align*}
\Hess_{\lambda}f(x,x) \geq -2 \min\limits_{\substack{\{i,j,k\}\\=\{1,2,3\}}} \frac{Z_iZ_j}{Z_i+Z_j}x_k^2,
\end{align*}
while if $Z_1=Z_2=Z_3=0$, we have that $\Hess_{\lambda}f(x,x) \geq 0$ for all $x \in \ker(df_{\lambda})$. Here, for $i=1,2,3$, we denote
\begin{align*}
Z_i := Z_i(\lambda) := \frac{\partial_kf(\lambda)- \partial_jf(\lambda)}{\lambda_k-\lambda_j}
\end{align*}
for $\{i,j,k\} = \{1,2,3\}$. 
\end{enumerate}
\end{thm}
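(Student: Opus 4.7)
Because $f$ is symmetric and $0$ is a regular value, the function $F := f\circ\lambda:\mathcal{A}_3\to\R$ is smooth and $\nabla F$ does not vanish on $\partial\Omega_f$, so $\partial\Omega_f$ is a smooth hypersurface. With outward normal $\mathbf{n}=\nabla F/\|\nabla F\|$, the standard level-set computation gives $\text{\Gemini}^{\partial\Omega_f}_R(X,X)=-\mathrm{Hess}\,F(X,X)/\|\nabla F\|$ for $X\in T_R\partial\Omega_f$, so Bianchi-convexity of $\Omega_f$ is equivalent to
\begin{align*}
\sum_{\ell=1}^{3} \mathrm{Hess}\,F(T_\ell,T_\ell) \;\geq\; 0
\end{align*}
at every $R\in\partial\Omega_f$ and every triple $(T_1,T_2,T_3)\in(T_R\partial\Omega_f)^3$ satisfying the second Bianchi identity. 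By continuity it is enough to verify this at those $R$ whose eigenvalues are pairwise distinct, which is the range of $\lambda$ considered in the theorem.

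Fix such an $R$ and choose an orthonormal basis $(b_1,b_2,b_3)$ of $\R^3$ so that $\omega_k:=b_i\wedge b_j$ with $\{i,j,k\}=\{1,2,3\}$ (cyclically ordered) is an eigenvector of $R$ with eigenvalue $\lambda_k$; represent each $T_\ell$ as a symmetric matrix $(T_\ell)_{pq}$ in this basis. Standard second-order perturbation of eigenvalues of symmetric operators, applied to $F=f\circ\lambda$, yields
\begin{align*}
\mathrm{Hess}\,F(T,T) \;=\; 2\bigl(Z_3 T_{12}^2 + Z_2 T_{13}^2 + Z_1 T_{23}^2\bigr) + \mathrm{Hess}_\lambda f\bigl(T_{\mathrm{diag}},T_{\mathrm{diag}}\bigr),
\end{align*}
with $T_{\mathrm{diag}}=(T_{11},T_{22},T_{33})$. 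Tangency $T_\ell\in T_R\partial\Omega_f$ becomes $df_\lambda((T_\ell)_{\mathrm{diag}})=0$, while the Bianchi identity $\sum_\ell T_\ell\omega_\ell=0$ unpacks componentwise into the three scalar couplings
\begin{align*}
(T_1)_{11}+(T_2)_{12}+(T_3)_{13}=0,\quad (T_1)_{12}+(T_2)_{22}+(T_3)_{23}=0,\quad (T_1)_{13}+(T_2)_{23}+(T_3)_{33}=0,
\end{align*}
each tying a single diagonal entry $(T_\ell)_{\ell\ell}$ to a specific pair of off-diagonals of the other two $T$'s.

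The crucial structural observation is that the three off-diagonals $(T_1)_{23},(T_2)_{13},(T_3)_{12}$ are \emph{unconstrained} by Bianchi and enter $\sum_\ell\mathrm{Hess}\,F(T_\ell,T_\ell)$ with coefficients $2Z_1,2Z_2,2Z_3$; so if any $Z_i<0$ one drives the corresponding free variable to infinity and makes the sum arbitrarily negative, forcing condition 1. Under $Z_i\geq 0$, the remaining six constrained off-diagonals split into three \emph{independent} two-variable pairs, and minimizing $Z_p z^2+Z_q y^2$ under $z+y=\alpha$ gives the classical value $\frac{Z_pZ_q}{Z_p+Z_q}\alpha^2$ (the degenerate case $Z_p+Z_q=0$ forces $Z_1=Z_2=Z_3=0$ and is handled separately). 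Carrying out the three independent pair minimizations, the infimum of $\sum_\ell\mathrm{Hess}\,F(T_\ell,T_\ell)$ over all Bianchi-triples with prescribed diagonals $v_\ell:=(T_\ell)_{\mathrm{diag}}\in\ker df_\lambda$ equals
\begin{align*}
\sum_{\ell=1}^{3}\left(\mathrm{Hess}_\lambda f(v_\ell,v_\ell) + 2\,\frac{Z_i Z_j}{Z_i+Z_j}\,(v_\ell)_\ell^2\right),\qquad \{i,j,\ell\}=\{1,2,3\}.
\end{align*}
Since the three summands depend on $v_1,v_2,v_3$ independently, nonnegativity of the sum for all triples in $(\ker df_\lambda)^3$ is equivalent to nonnegativity of each summand separately; as $\ell$ ranges over $\{1,2,3\}$ this is precisely condition 2 (the $\min_k$ arising because one needs the weakest-looking inequality to hold uniformly in $v$). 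The degenerate case $Z_1=Z_2=Z_3=0$ reduces directly to $\mathrm{Hess}_\lambda f\geq 0$ on $\ker df_\lambda$.

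The main obstacle is the careful second-order eigenvalue perturbation and the realization that Bianchi cleanly decouples the constrained off-diagonals into three independent two-variable optimizations; once that decoupling is seen, the weights $\frac{Z_iZ_j}{Z_i+Z_j}$ and the exact form of condition 2 fall out of a one-line Lagrange computation.
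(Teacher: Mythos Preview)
The paper does not actually prove this theorem; it only quotes the statement and refers to the author's thesis for the proof. So there is no in-paper argument to compare against. That said, your outline is essentially the natural proof and is correct in its main points: the level-set expression for $\text{\Gemini}^{\partial\Omega_f}$, the second-order eigenvalue perturbation formula for $\Hess F$, the coordinate form of the Bianchi constraint, the identification of the three unconstrained off-diagonals (giving condition~1), and the three decoupled two-variable minimizations (giving the weights $\frac{Z_iZ_j}{Z_i+Z_j}$ and hence condition~2) are all right.

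One point you pass over deserves a sentence of justification. The definition of a Bianchi tuple requires the identity to hold with respect to \emph{some} orthonormal basis, not necessarily the eigenbasis of $R$; you silently take these to coincide. This is legitimate, but for the sufficiency direction you should say why: a tuple $(T_1,T_2,T_3)$ satisfying the second Bianchi identity with respect to an orthonormal basis $(b'_1,b'_2,b'_3)$ is the same datum as a linear map $T:\R^3\to\mathcal{A}_3$, $b'_\ell\mapsto T_\ell$, satisfying the basis-free tensorial Bianchi identity, and under an orthogonal change of basis both the identity and the sum $\sum_\ell\text{\Gemini}_R(T_\ell,T_\ell)=\tr\big(\text{\Gemini}_R(T(\cdot),T(\cdot))\big)$ are unchanged. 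Hence one may indeed compute in the eigenbasis of $R$. Once this is said, the density/continuity reduction to distinct eigenvalues and the rest of your argument go through.
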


\begin{bem} 
Using Theorem \ref{characterizationBconvexSets}, one can prove that if $\Omega_f$ is Bianchi-convex and scale-invariant, then the connected components of $\Omega_f$ are convex, c.f. \cite[Prop.~3.43]{ThesisSFB}.
\end{bem}

We are now in the position to show that Bianchi-convexity is a genuine generalization of convexity, i.e.\  there are Bianchi-convex sets which are \textit{not} convex.

\begin{prop} \label{BSPstriktBkonvexnichtkonvex}
For $a \in \big(\frac{1}{3}, \frac{2}{5}\big)$ and $c>0$, the set
\begin{align*}
\Omega_{a,c} := \{ R \in \mathcal{A}_3 \mid \|R\|^2 - a \, \scal(R)^2 \leq c \}
\end{align*}
is Bianchi-convex but not convex.
\end{prop}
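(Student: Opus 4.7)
The plan is to apply the characterization of Theorem \ref{characterizationBconvexSets} to the smooth symmetric function
\[
f(\lambda_1, \lambda_2, \lambda_3) := \lambda_1^2 + \lambda_2^2 + \lambda_3^2 - a(\lambda_1 + \lambda_2 + \lambda_3)^2 - c,
\]
so that $\Omega_{a,c} = \Omega_f$ by Remark \ref{scal=2tr}. One first checks that $0$ is a regular value of $f$: the gradient $df_\lambda = 2\lambda - 2a(\lambda_1+\lambda_2+\lambda_3)(1,1,1)$ vanishes only at $\lambda = 0$ (since $a \neq 1/3$), where $f = -c < 0$.

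\textbf{Non-convexity.} I would first exhibit explicit curvature tensors witnessing the failure of convexity. Choose any trace-free $R_0 \in \mathcal{A}_3$ with $\|R_0\|^2 > c$, and for a parameter $s > 0$ set $R_\pm := R_0 \pm s\, I$. Since $\scal(R_\pm) = \pm 3s$ and $\|R_\pm\|^2 = \|R_0\|^2 + 3s^2$, we get
\[
f(R_\pm) \;=\; \|R_0\|^2 - c - 3(3a-1)\, s^2,
\]
which is $\leq 0$ for $s$ sufficiently large because $3a - 1 > 0$. Hence $R_\pm \in \Omega_{a,c}$, but their midpoint $\tfrac{1}{2}(R_+ + R_-) = R_0$ satisfies $f(R_0) = \|R_0\|^2 - c > 0$, so $\Omega_{a,c}$ is not convex.

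\textbf{Bianchi-convexity.} From $\partial_i f(\lambda) = 2\lambda_i - 2a(\lambda_1+\lambda_2+\lambda_3)$ one reads off $\partial_i f - \partial_j f = 2(\lambda_i - \lambda_j)$, which gives condition (1) of Theorem \ref{characterizationBconvexSets} and, when $\lambda_1 < \lambda_2 < \lambda_3$, yields $Z_i(\lambda) = 2$ for all $i$. Thus $Z_i Z_j /(Z_i + Z_j) = 1$ regardless of the permutation, and using $\Hess_\lambda f(x,x) = 2\|x\|^2 - 2a(x_1+x_2+x_3)^2$ condition (2) reduces to
\[
\|x\|^2 - a(x_1+x_2+x_3)^2 + \min_{k} x_k^2 \;\geq\; 0.
\]
I would establish this inequality for every $x \in \R^3$, not merely for $x \in \ker df_\lambda$. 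Assuming without loss of generality that $x_1^2 = \min_k x_k^2$, the left-hand side equals $\langle M x, x \rangle$ for the symmetric matrix $M := \diag(2,1,1) - a\, e e^T$ with $e = (1,1,1)^T$, so it suffices to check that $M$ is positive semidefinite.

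\textbf{The main obstacle.} The heart of the argument is the eigenvalue analysis of $M$, which is what forces the threshold $a < 2/5$. One eigenvector is $(0,1,-1)^T$ with eigenvalue $1$; on the orthogonal two-dimensional complement spanned by $(1,0,0)^T$ and $(0,1,1)^T$ the restriction is a $2 \times 2$ matrix whose eigenvalues are
\[
\mu_\pm \;=\; \tfrac{1}{2}\!\left((3-3a) \pm \sqrt{9a^2 + 2a + 1}\right).
\]
The smaller $\mu_-$ is nonnegative precisely when $(3-3a)^2 \geq 9a^2 + 2a + 1$, equivalently $a \leq 2/5$. Since $a < 2/5$, $M$ is positive semidefinite, and permutation symmetry covers the cases in which the minimum is attained at $x_2^2$ or $x_3^2$ instead. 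Thus condition (2) is verified and Theorem \ref{characterizationBconvexSets} yields that $\Omega_{a,c}$ is Bianchi-convex.
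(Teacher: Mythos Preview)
Your proof is correct and follows the same route as the paper: identify $\Omega_{a,c}$ with $\Omega_{f_{a,c}}$ via Remark~\ref{scal=2tr} and then invoke the characterization Theorem~\ref{characterizationBconvexSets}. The paper's own proof is in fact only a sketch---it simply states that ``applying Proposition~\ref{characterizationBconvexSets} and using that $a<\tfrac{2}{5}$'' gives Bianchi-convexity and that non-convexity ``is easy to verify''---whereas you actually carry out both verifications: the explicit midpoint argument with $R_0\pm sI$ for non-convexity, and the reduction of condition~(2) (via $Z_i\equiv 2$) to positive semidefiniteness of $\diag(2,1,1)-a\,ee^T$, whose determinant $2-5a$ pinpoints the threshold $a\leq \tfrac{2}{5}$.
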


\begin{proof}
For $a \in \big(\frac{1}{3}, \frac{2}{5}\big)$ and $c >0$, we consider the function
\begin{align*}
f_{a,c}: \R^3 \rightarrow \R: x &\mapsto \|x\|^2 - a \langle x, I \rangle^2 - c ,
\end{align*}
where $I:= (1,1,1)^t$.
Obviously, $f_{a,c}$ is smooth and symmetric. For $x := \big(\sqrt{\frac{c}{1-a}},0,0\big)^t\in \R^3$, we have that $f_{a,c}(x)=0$, thus $f_{a,c}^{-1}(0) \neq \emptyset$. Moreover, $0$ is a regular value of $f_{a,c}$, since $c \neq 0$. Remembering Remark \ref{scal=2tr}, we obtain that $\Omega_{a,c} = \Omega_{f_{a,c}}$. Applying  Proposition \ref{characterizationBconvexSets} and using that $a < \frac{2}{5}$ yields that $\Omega_{a,c}$ is  Bianchi-convex. Moreover, it is easy to verify that $\Omega_{a,c}$ is not convex.
\end{proof}

\begin{center}
\begin{minipage}[b]{0.6\textwidth}
\vspace{5mm}
\includegraphics[width=0.7\textwidth]{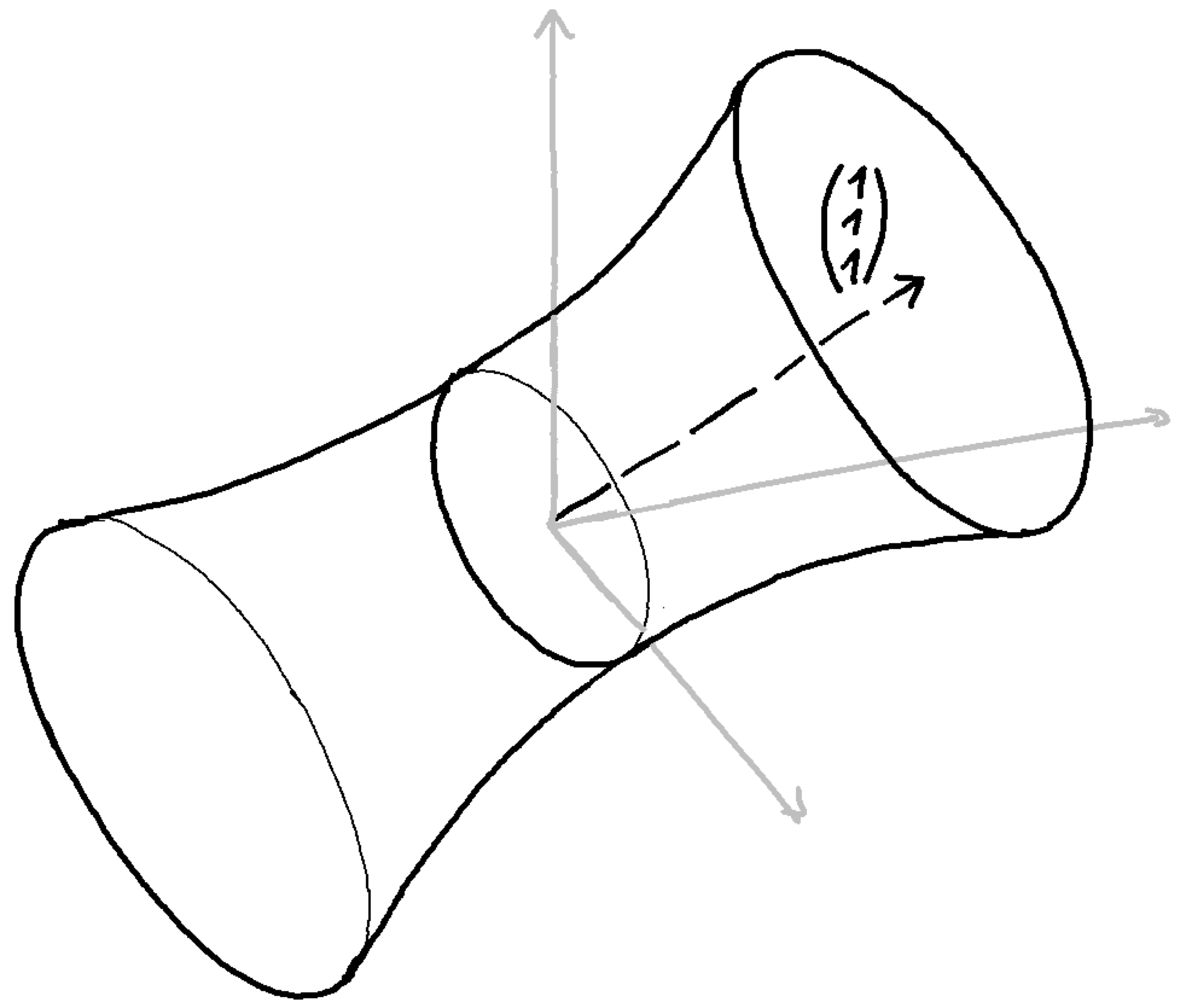}
\centering 
\end{minipage}
\begin{minipage}[t]{0.5\textwidth}
\vspace{-1mm}
\ \ \ \ Sketch of $f^{-1}_{a,c}(0)$ for $a \in \left(\frac{1}{3}, \frac{2}{5}\right)$ and $c>0$
\end{minipage}
\end{center}


\begin{bem}
From the proof of Proposition \ref{BSPstriktBkonvexnichtkonvex}, it even follows that for $a \in \big(\frac{1}{3}, \frac{2}{5}\big)$ and $c>0$, the set $\Omega_{a,c}$ is \textit{strictly} Bianchi-convex, that is for all $R \in \partial \Omega_{a,c}$ and $(T_1,T_2,T_3) \in (T_R\partial \Omega_{a,c})^3 \setminus \{0\}$ satisfying the second Bianchi identity, the inequality
\begin{align*}
\sum_{i=1}^3 \text{\Gemini}^{\partial \Omega_{a,c}}_R(T_i,T_i) <0
\end{align*}
is strict. Consequently, small deformations of the sets $\Omega_{a,c}$ are still Bianchi-convex.
\end{bem}

In the author's thesis \cite{ThesisSFB}, it was shown that suitable subsets of the sets $\Omega_{a,c}$ are invariant under the ordinary differential equation \eqref{ODEeinleitung}:

\begin{prop} \label{3DMengeODEinv}
For every $a \in \big(\frac{1}{3}, \frac{2}{5}\big)$ and $c>0$, there exists a constant $b_{a,c}>0$ such that the intersection  
\begin{align*}
\widetilde{\Omega}_{a,c} := \Omega_{a,c} \cap \{R \in \mathcal{A}_3 \mid \scal(R) \geq b_{a,c}\} 
\end{align*}
is invariant under the ordinary differential equation \eqref{ODEeinleitung}. 
\end{prop}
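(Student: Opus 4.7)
The plan is to invoke Proposition~\ref{ODETangkegel}: it suffices to verify, for every $R \in \partial \widetilde{\Omega}_{a,c}$, that $\Phi(R) := R^2 + R^{\#} \in T_R \widetilde{\Omega}_{a,c}$. The boundary decomposes into two smooth faces: the slanted face $B_{\mathrm{I}} := \{F = c\} \cap \{\scal \geq b_{a,c}\}$, where $F(R) := \|R\|^2 - a\,\scal(R)^2$, and the flat face $B_{\mathrm{II}} := \Omega_{a,c} \cap \{\scal = b_{a,c}\}$. At interior points of each face the tangent cone is a closed half-space, and at the corner $B_{\mathrm{I}} \cap B_{\mathrm{II}}$ it is their intersection, so it is enough to verify $\langle \Phi(R), \nabla F(R)\rangle \leq 0$ on $B_{\mathrm{I}}$ and $\scal'(R) \geq 0$ on $B_{\mathrm{II}}$ separately.

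Since $\mathcal{A}_3 = S^2(\Lambda^2(\R^3)^*)$, the eigenvalues $\lambda_1 \leq \lambda_2 \leq \lambda_3$ of $R$ evolve under \eqref{ODEeinleitung} according to $\lambda_i' = \lambda_i^2 + \lambda_j \lambda_k$ for $\{i,j,k\}=\{1,2,3\}$. Writing $e_k$ for the elementary symmetric polynomials of the $\lambda_i$, a direct computation gives
\begin{align*}
\scal'(R) = e_1^2 - e_2, \qquad (\|R\|^2)' = 2 e_1^3 - 6 e_1 e_2 + 12 e_3.
\end{align*}
On $B_{\mathrm{II}}$, the Cauchy--Schwarz estimate $e_2 \leq e_1^2/3$ gives $\scal' \geq 2 e_1^2/3 > 0$, which handles the flat face for any choice of $b_{a,c} > 0$.

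The main work is on $B_{\mathrm{I}}$. Using the boundary relation $(1-a) e_1^2 - 2 e_2 = c$ to eliminate $e_2$ from $F' = (\|R\|^2)' - 2a\, e_1 \scal'$ yields
\begin{align*}
F' = -(1-a)^2\, e_1^3 + (3-a)\, c\, e_1 + 12\, e_3.
\end{align*}
The main obstacle is to control the $+12\, e_3$ contribution against the dominant negative cubic. To do this, rescale by $\mu_i := \lambda_i / e_1$ (legal since $e_1 \geq b_{a,c} > 0$), so that $\sum \mu_i = 1$ and $\sum \mu_i^2 = a + c/e_1^2$. A short Lagrange multiplier computation shows that the maximum of $\prod \mu_i$ under these constraints is attained when two of the $\mu_i$ coincide, and evaluates to
\begin{align*}
\max \prod_i \mu_i = \tfrac{1}{27} - \tfrac{t}{6} + 2\bigl(\tfrac{t}{6}\bigr)^{3/2}, \qquad t := a - \tfrac{1}{3} + \tfrac{c}{e_1^2}.
\end{align*}

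The crux is then the elementary inequality $12\,\max \prod \mu_i < (1-a)^2$ in the limit $e_1 \to \infty$ (equivalently $t \to a - 1/3$) for every $a \in (\tfrac{1}{3}, \tfrac{2}{5})$. After the substitution $u := a - 1/3 \in (0, 1/15]$, this reduces to
\begin{align*}
\tfrac{2}{3} + u - \tfrac{4}{\sqrt{6}}\sqrt{u} > 0,
\end{align*}
which is straightforward to check on $(0, 1/15]$ by monotonicity and endpoint evaluation (note the equality at $u=0$ disappears once $u>0$). This produces a uniform gap $\delta(a) > 0$ with $12\,\max\prod\mu_i \leq (1-a)^2 - \delta(a)$ for $e_1$ large enough, and then
\begin{align*}
F' \;\leq\; -\delta(a)\, e_1^3 + (3-a)\, c\, e_1,
\end{align*}
which is negative on $B_{\mathrm{I}}$ once $b_{a,c}$ is chosen so large that the cubic dominates the linear term. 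Together with the flat-face bound, Proposition~\ref{ODETangkegel} yields the claimed invariance.
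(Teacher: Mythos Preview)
Your argument is correct. The eigenvalue evolution $\lambda_i' = \lambda_i^2 + \lambda_j\lambda_k$, the formulas $\scal' = e_1^2 - e_2$ and $(\|R\|^2)' = 2e_1^3 - 6e_1e_2 + 12e_3$, the elimination of $e_2$ on $B_{\mathrm{I}}$, and the constrained maximisation of $\prod\mu_i$ all check out; the resulting scalar inequality in $u$ is in fact the perfect square $\bigl(\sqrt{u} - \tfrac{2}{\sqrt{6}}\bigr)^2 > 0$, which is strict for every $u \neq \tfrac{2}{3}$ and hence on all of $(0,\tfrac{1}{15})$, so the monotonicity-and-endpoint check is not even needed. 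One small sharpening: since $g(t) := 12\bigl(\tfrac{1}{27} - \tfrac{t}{6} + 2(\tfrac{t}{6})^{3/2}\bigr)$ is decreasing on $[0,\tfrac{2}{3}]$, the gap $\delta(a) = (1-a)^2 - g(u)$ already holds for every $t \geq u$ in that range, not only in the limit $e_1 \to \infty$; this makes the passage from ``gap for large $e_1$'' to the final bound cleaner.

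As for comparison with the paper: there is none to make, since the paper does not give a proof of this proposition but refers to the author's thesis \cite{ThesisSFB}. The only additional information the paper supplies is the explicit constant $b_{a,c} = \sqrt{\tfrac{3c}{3a-1}}\,\sinh\!\bigl(\tfrac{3}{2}\bigr)$ recorded after the statement. Your argument yields only the existence of a sufficiently large $b_{a,c}$ (depending on $\delta(a)$), not this closed form; but since the proposition asserts only existence, this is not a gap.
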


Notice that since $\{R \in \mathcal{A}_3 \mid \scal(R) \geq b_{a,c}\}$ as a half space is convex and hence also Bianchi-convex, with a view on Lemma \ref{SchnitteWiederBkonvex}, the set $\widetilde{\Omega}_{a,c}$ as intersection of Bianchi-convex sets is still Bianchi-convex. Moreover, a possible choice of the constant $b_{a,c}$ is 
\begin{align*} 
b_{a,c}:= \sqrt{\frac{3c}{3a-1}} \sinh\!\left(\frac{3}{2}\right).
\end{align*}

It is easy to verify that the sets $\widetilde{\Omega}_{a,c}$ as in Proposition \ref{3DMengeODEinv} are uniformly transversally star-shaped with respect to $\lambda I$ for all $\lambda > b_{a,c}$. Therefore, Theorem A yields that these sets are invariant under the Ricci flow.

\begin{bem}
In dimensions $n \geq 4$, the space $\mathcal{A}_n$ of algebraic curvature tensors has a non-vanishing Weyl part, therefore it seems not appropriate to consider sets that are given in terms of their eigenvalues. However, even for sets analogous to the sets $\Omega_f$ above, a characterization of Bianchi-convexity is currently unknown. 

The three-dimensional example above can be generalized in the following way. Let $\Omega \subseteq \mathcal{A}_n$ be a closed cone containing the identity $I$. Since $\Omega$ is a cone (hence $\text{\Gemini}^{\partial \Omega}_R(R, R) = 0$ for all $R \in \partial \Omega$), it can not be strictly convex. However, if the {\em base} of $\Omega$ is strictly convex, $\Omega$ can still be strictly Bianchi-convex: for example, this is so if the opening angle of the cone is not too large, in the sense that $\sphericalangle(R, I)$ is small enough for all $R \in \Omega$ (c.f.\ the discussion in Section~5.2 of \cite{ThesisSFB}). This means that small perturbations of the set $\Omega$ are still Bianchi-convex, even though they may fail to be convex. Moreover, again if the opening angle of the cone is not too large, $\Omega$ will be invariant under \eqref{ODEeinleitung}, as the $I$ axis is an attractor of \eqref{ODEeinleitung}. The task is now to quantify this, i.e.\ to understand how large the opening angle of $\Omega$ may be (where the answer may depend on the direction perpendicular to $\Omega$). This is a topic of ongoing research.
\end{bem}


\section{The maximum principle} \label{MaximumPrinciples}

In this chapter, we give a reformulation of Hamilton's maximum principle in the special case of algebraic curvature tensors and the Ricci flow. The aim is to generalize this version to Bianchi-convex sets. 
\medskip

In the setting of algebraic curvature tensors and the Ricci flow, Hamilton's  maximum principle \cite[Theorem 4.3]{4-mfdsHamilton} can be formulated as follows.

\begin{thm} \label{Hamilton2}
Let $g_t$ be a solution to the Ricci flow on an $n$-dimensional manifold $M$. Let further $\Omega \subseteq \A$ be an $O(n)$-invariant, closed and convex  set.
If $\Omega$ is invariant under the ordinary differential equation \eqref{ODEeinleitung},
then the family of sets $\Omega^{g_t} \subseteq S^2_B(\Lambda^2T^*\!M)$ is invariant under the partial differential equation
\begin{align} \label{PDEHamiltonumformuliert}
\nabla_{\!\frac{\partial}{\partial t}}R_t = \Delta_{g_t}R_t + R_t^2 + R_t^{\#},
\end{align}
i.e.\ for solutions $R$ to \eqref{PDEHamiltonumformuliert} with $R_0(x) \in \Omega^{g_0}_x$ for all $x \in M$, we have that $R_t(x) \in \Omega^{g_t}_x$ for all $x \in M$ and $t \in [0,T_0)$. Here, $T_0 \leq T$ denotes the maximal existence time of $R$.
\end{thm}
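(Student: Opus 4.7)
The proof is Hamilton's classical first-exit / perturbation argument, using three tools from the paper: Proposition~\ref{ODETangkegel} characterizes ODE-invariance by the tangent-cone condition $\Phi(R) := R^2 + R^\# \in T_R\Omega$ for every $R \in \partial\Omega$; convex sets are Bianchi-convex (Remark~\ref{convexnegsemidef}), so Lemma~\ref{laplacezeigtnachinnen} applies with $\epsilon = 0$ and gives $\langle \Delta_g R, \mathbf{n}\rangle \leq 0$ at a boundary touching point of a smooth section of $\Omega^g$; and parallel transport along the connection $\nabla$ from Definition~\ref{defnabla} together with the $O(n)$-invariance of $\Omega$ identifies $\Omega^{g_t}_x$ with the fixed set $\Omega \subseteq \A$ fiberwise.

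\textbf{Setup.} Let $R_t$ solve \eqref{PDEHamiltonumformuliert} with $R_0(x) \in \Omega^{g_0}_x$. For $x_0 \in M$ pick a parallel frame $p_t \in O^{g_t}_{x_0}$, which exists by the remarks after Definition~\ref{ptparallel}; under $p_t^*$, $\Omega^{g_t}_{x_0}$ becomes the time-independent $\Omega$, and the $O(n)$-equivariance of the squaring and $\#$ operations lets us read \eqref{PDEHamiltonumformuliert} as an evolution in $\A$ with a fixed target. Extending $p_{t_0}$ radially by $\nabla^{g_{t_0}}$-parallel transport along $g_{t_0}$-geodesics out of $x_0$ gives a local frame identifying every nearby $\Omega^{g_{t_0}}_{x'}$ with the same $\Omega$, so that the fiberwise distance $d(R_{t_0}(x'), \Omega^{g_{t_0}}_{x'})$ can be computed against a fixed target.

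\textbf{First-exit contradiction.} Assume $R_t$ leaves $\Omega^{g_t}$ and perturb by $R_t^\eta := R_t - \eta e^{Lt} V$, with $V \in \A$ chosen so that $\langle V, \mathbf{n}\rangle \geq v_0 > 0$ for every outward normal $\mathbf{n}$ to $\Omega$ arising along the trajectory, and $L > L_0 \|V\|/v_0$ where $L_0$ is the local Lipschitz constant of $\Phi$ on a bounded region containing the trajectory (the same constant appearing in the proof of Proposition~\ref{ODETangkegel}). If $R_t^\eta$ also exits $\Omega^{g_t}$, let $t_0 > 0$ be the first exit time and $x_0 \in M$ a first-contact point; by continuity and compactness of $M$ such a pair exists, and $R_{t_0}^\eta$ still lies pointwise in $\Omega^{g_{t_0}}$ with $R_{t_0}^\eta(x_0) \in \partial \Omega^{g_{t_0}}_{x_0}$. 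At $(x_0, t_0)$: Lemma~\ref{laplacezeigtnachinnen} (applied to the section $R_{t_0}^\eta$ of $\Omega^{g_{t_0}}$, touching the boundary at $x_0$) yields $\langle \Delta_{g_{t_0}} R_{t_0}^\eta(x_0), \mathbf{n}\rangle \leq 0$; Proposition~\ref{ODETangkegel} combined with Lemma~\ref{tangentconeinhalfspacessm} gives $\langle \Phi(R_{t_0}^\eta(x_0)), \mathbf{n}\rangle \leq 0$; and the perturbation contributes $\langle -\eta L e^{Lt_0} V, \mathbf{n}\rangle \leq -\eta L v_0 e^{Lt_0}$ to the time derivative, while the Lipschitz error $\Phi(R_t) - \Phi(R_t^\eta)$ is bounded in norm by $L_0 \eta \|V\| e^{Lt_0}$. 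Rewriting $\nabla_{\partial/\partial t} R_t^\eta = \Delta R_t^\eta + \Phi(R_t^\eta) + [\Phi(R_t) - \Phi(R_t^\eta)] - \eta L e^{Lt} V$ and combining these bounds gives $\langle \nabla_{\partial/\partial t} R_t^\eta(x_0), \mathbf{n}\rangle|_{t_0} \leq \eta e^{Lt_0} (L_0 \|V\| - L v_0) < 0$, which contradicts the first-exit condition (which forces the outward normal component to be non-negative). Hence $R_t^\eta \in \Omega^{g_t}$ for all $t$, and $\eta \searrow 0$ yields $R_t \in \Omega^{g_t}$.

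\textbf{Main obstacle.} The principal subtlety is the construction of the perturbing vector $V$ so that $\langle V, \mathbf{n}\rangle \geq v_0 > 0$ \emph{uniformly} across the compact set of first-contact outward normals. Since $R_t$ is smooth and $M$ compact, the trajectory stays in a bounded region of $\A$ and the set of visited boundary points is precompact; by convexity of $\Omega$ the outward normals there form a closed subset of the unit sphere, and one can choose $V$ after localizing to a compact region (for instance, proportional to an interior direction of $\Omega$ pushed outward, or assembled from several directions by a compactness / partition-of-unity argument). The secondary subtlety of non-smooth $\partial \Omega$ is exactly what the supporting-submanifold formalism of Section~2.2 was built for: Lemma~\ref{laplacezeigtnachinnen} and Lemma~\ref{tangentconeinhalfspacessm} both accommodate a supporting submanifold $N$ at the contact point, with $\mathbf{n}$ the outward normal to $N$, and the argument goes through verbatim.
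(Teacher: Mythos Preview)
The paper does not prove this theorem; it is Hamilton's classical result, stated in the paper's language and referenced to \cite{4-mfdsHamilton} and the thesis. The closest proof in the paper is that of the Bianchi-convex generalization, Theorem~\ref{meinMP}.

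Your overall architecture --- perturb inward, locate a first exit, and combine Lemma~\ref{laplacezeigtnachinnen} for the Laplacian with Proposition~\ref{ODETangkegel} plus Lemma~\ref{tangentconeinhalfspacessm} for the reaction term, while the perturbation supplies the strict sign --- matches the proof of Theorem~\ref{meinMP} and is the right plan. There is, however, a genuine gap in your perturbation step, which you yourself flag as the ``main obstacle'' without actually closing it.

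A \emph{fixed} additive shift $R_t^\eta = R_t - \eta e^{Lt}V$ cannot work in general. If $\Omega$ is bounded (a ball, say), the outward normals along $\partial\Omega$ cover the whole sphere, so no single $V \in \A$ satisfies $\langle V,\mathbf n\rangle \geq v_0 > 0$ for every $\mathbf n$ that may arise at a first-contact point; and since the first-contact point depends on $V$, one cannot choose $V$ a posteriori. Your partition-of-unity suggestion would make $V$ depend on $(x,t)$, introducing derivatives of $V$ into $\nabla_{\partial/\partial t}R_t^\eta$ and $\Delta_{g_t} R_t^\eta$ that spoil the clean estimate.

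The resolution, and this is exactly what the paper does in the proof of Theorem~\ref{meinMP}, is to perturb \emph{toward a fixed interior point} rather than in a fixed direction: set $R_t^\varepsilon := (1-\varepsilon e^{bt}) R_t + \varepsilon e^{bt}\,\lambda I$. Remark~\ref{HamiltonSetsUTS} produces $\lambda I \in \Omega$ for any $O(n)$-invariant closed convex $\Omega$; when $\lambda I$ is interior, convexity alone gives $R + \varepsilon B_r(\lambda I - R) \subseteq \Omega$ for all $R \in \partial\Omega$ and $\varepsilon \in [0,1]$ (with $r$ the radius of a ball around $\lambda I$ inside $\Omega$), and then Lemma~\ref{coneCondImplnI} yields the uniform inward push $\langle\mathbf n_S, \lambda I - S\rangle \leq -a < 0$ over any compact piece of $\partial\Omega$. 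This replaces your $-\eta L e^{Lt}\langle V,\mathbf n\rangle$ term by $\varepsilon b e^{bt}\langle \lambda I - R_t^\varepsilon, \mathbf n\rangle \leq -\varepsilon a b\, e^{bt}$, and the rest of your argument goes through unchanged.

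One minor point: Lemma~\ref{laplacezeigtnachinnen} as stated assumes that $(\nabla_{b_1}R,\dots,\nabla_{b_n}R)$ satisfies the second Bianchi identity, which a general solution of \eqref{PDEHamiltonumformuliert} need not. In the convex case this hypothesis is superfluous --- inspecting the proof, each summand $\text{\Gemini}^N_{R(x)}(h_i'(0),h_i'(0))$ in the final inequality is already $\leq 0$ by Remark~\ref{convexnegsemidef}, without any Bianchi constraint --- but you should say so rather than invoke the lemma wholesale.
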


For details concerning the reformulation in this language see the author's thesis \cite[Section 4.2]{ThesisSFB}. As an immediate consequence of Theorem \ref{Hamilton2}, we find

\begin{kor} \label{CorHamilton}
Let $\Omega \subseteq \A$ be an $O(n)$-invariant, closed and convex set. If $\Omega$ is invariant under the ordinary differential equation \eqref{ODEeinleitung}, then $\Omega$ is invariant under the Ricci flow (in the sense of Definition \ref{invariantUnderRF}).
\end{kor}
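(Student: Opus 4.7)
The plan is to invoke Theorem \ref{Hamilton2} directly; the corollary is really just a translation of that theorem from the PDE-for-sections language back into the language of metrics satisfying a curvature condition. So the whole work amounts to unwinding two definitions (``$g$ satisfies $\Omega$'' and ``$\Omega$ is invariant under the Ricci flow'') and identifying $Rm_{g_t}$ as a solution of the relevant PDE.

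Concretely, I would start by fixing an $n$-dimensional compact manifold $M$ and a Ricci flow solution $g_t$, $t \in [0,T)$, with $g_0$ satisfying $\Omega$. By definition, this last condition means exactly that $Rm_{g_0}(x) \in \Omega^{g_0}_x$ for every $x \in M$. The next step is to recognise $R_t := Rm_{g_t}$ as a solution of the partial differential equation \eqref{PDEHamiltonumformuliert} on the bundle $S^2_B(\Lambda^2 T^*\!M) \to M \times \R$: this is the content of Lemma \ref{evolutionRmallg}. Thus $R_t$ is a solution of \eqref{PDEHamiltonumformuliert} whose initial value lies fibrewise in $\Omega^{g_0}$.

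Now I apply Theorem \ref{Hamilton2} with this $R_t$. Since $\Omega$ is $O(n)$-invariant, closed, convex and invariant under the ODE \eqref{ODEeinleitung} by hypothesis, the theorem yields $R_t(x) \in \Omega^{g_t}_x$ for all $x \in M$ and all $t$ in the maximal existence interval $[0,T_0)$ of $R_t$. The Riemannian curvature operator $Rm_{g_t}$ is defined wherever $g_t$ is, so $T_0 = T$ and we conclude that $Rm_{g_t}(x) \in \Omega^{g_t}_x$ for all $x \in M$ and all $t \in [0,T)$; equivalently, $g_t$ satisfies $\Omega$ for every such $t$. Since $M$ and $g_t$ were arbitrary, Definition \ref{invariantUnderRF} gives that $\Omega$ is invariant under the Ricci flow.

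I do not anticipate any real obstacle here, as Theorem \ref{Hamilton2} has already absorbed the analytic difficulty (the PDE maximum principle, the use of the parallel transport on space-time, and the invariance of $\Omega^{g_t}$ under $\nabla^{g_t}$). The only minor point to verify is the coincidence of the maximal existence time $T_0$ of $R_t$ as a solution of \eqref{PDEHamiltonumformuliert} with the Ricci flow existence time $T$, which is immediate since the unique such solution with initial value $Rm_{g_0}$ is $Rm_{g_t}$ itself and the latter is defined for all $t \in [0,T)$.
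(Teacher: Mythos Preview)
Your proposal is correct and matches the paper's approach exactly: the paper states the corollary as ``an immediate consequence of Theorem \ref{Hamilton2}'' with no further argument, and your write-up simply spells out that immediate consequence by recognising $R_t := Rm_{g_t}$ as a solution of \eqref{PDEHamiltonumformuliert} via Lemma \ref{evolutionRmallg} and then unwinding Definition \ref{invariantUnderRF}.
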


Next, we want to give a generalization of Hamilton's maximum principle in the setting of Corollary \ref{CorHamilton} to Bianchi-convex sets. In order to formulate this new maximum principle, we provide the following definition.

\begin{deff} \label{coneCondition}
A closed set $\Omega \subseteq \A$ is called \textit{uniformly transversally star-shaped with respect to $S \in \A$}, if for each compact set $K \subseteq \A$ there exists a constant $r>0$ such that for each $R \in K \cap \partial \Omega$ there is an $\varepsilon_0>0$ such that
\begin{align*}
R + \varepsilon B_r(S-R) \subseteq \Omega
\end{align*}
for all $\varepsilon \in [0, \varepsilon_0)$.
\end{deff}

Notice that if $\Omega$ is uniformly transversally star-shaped with respect to $S$, then for all $R \in \Omega$ we have that $S-R$ is in the interior of the tangent cone $T_R\Omega$.

\begin{thm} \label{meinMP}  
Let $\Omega \subseteq \A$ be $O(n)$-invariant, closed, Bianchi-convex and uniformly transversally star-shaped with respect to $\lambda I$ for some $\lambda \in \R$. If $\Omega$ is invariant under the ordinary differential equation \eqref{ODEeinleitung},
then $\Omega$ is invariant under the Ricci flow (see Definition \ref{invariantUnderRF})
\end{thm}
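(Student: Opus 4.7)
My plan is to mimic Hamilton's classical proof \cite[Theorem 4.3]{4-mfdsHamilton}, substituting the two uses of convexity by two new ingredients: Lemma~\ref{laplacezeigtnachinnen} handles the Laplacian term in the evolution equation via Bianchi-convexity, and the uniformly transversally star-shaped hypothesis supplies a quantitative strict-inward direction $\lambda I - R$. Let $g_t$, $t\in[0,T)$, be a Ricci flow on a compact $n$-manifold $M$ with $Rm_{g_0}(x)\in\Omega^{g_0}_x$ for every $x$; I fix $T'\in(0,T)$ and will establish the conclusion on $M\times[0,T']$, which suffices by arbitrariness of $T'$. Smoothness of the flow on this compact slab, combined with the $O(n)$-invariance of $\Omega$, produces a compact set $K\subseteq\A$ containing all frame-pullbacks $p^*Rm_{g_t}(x)$ and a uniform bound $B$ on $\sum_i\|\nabla^{g_t}_{b_i}Rm\|^2_{g_t}$. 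Applying Definition~\ref{coneCondition} to $K$ then gives a uniform radius $r>0$ such that $\langle\lambda I-R,\mathbf n\rangle\le -r$ for every $R\in K\cap\partial\Omega$ and every unit normal $\mathbf n$ of a supporting submanifold of $\Omega$ at $R$; by a compactness argument on $K\cap\partial\Omega$ one also obtains a uniform lower bound $\varepsilon_0>0$.

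\textbf{Perturbation.} For constants $\mu_0,\alpha>0$ to be fixed below, set $\mu(t):=\mu_0\,e^{\alpha t}$ and consider the perturbed section
\[
\tilde R_t(x) := (1-\mu(t))\,Rm_{g_t}(x) + \mu(t)\,\lambda I.
\]
For $\mu_0$ sufficiently small, the uniform star-shaped property at boundary points of $Rm_{g_0}$ and openness of the interior at interior points jointly give $\tilde R_0(x)\in\operatorname{int}(\Omega^{g_0}_x)$ uniformly in $x$. Using $\nabla_{\partial_t}(\lambda I)=0$ together with Lemma~\ref{evolutionRmallg}, $\tilde R$ satisfies
\[
\nabla_{\partial_t}\tilde R = \Delta_{g_t}\tilde R + (1-\mu)\Phi(Rm_{g_t}) + \mu'(\lambda I - Rm_{g_t}),
\]
and a direct expansion using the quadraticity of $\Phi$ yields $(1-\mu)\Phi(Rm_{g_t})=\Phi(\tilde R)+\mu\,Q$ with $\|Q\|\le C$ uniformly on $K$.

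\textbf{Contradiction at first touch.} Suppose, contrary to the conclusion, that
\[
t^*:=\inf\{t\in[0,T']\mid \tilde R_t(x)\notin\operatorname{int}(\Omega^{g_t}_x)\text{ for some }x\in M\} < T'.
\]
By continuity and closedness, $\tilde R_{t^*}$ remains a section of $\Omega^{g_{t^*}}$, and there is $x^*\in M$ with $\tilde R_{t^*}(x^*)\in\partial\Omega^{g_{t^*}}_{x^*}$. Choose a parallel family of orthonormal frames $p_t$ at $x^*$ (Definition~\ref{ptparallel}) and set $S(t):=p_t^*\tilde R_t(x^*)\in\A$. Then $S(t)\in\operatorname{int}\Omega$ for $t<t^*$ while $S(t^*)\in\partial\Omega$, so $\langle S'(t^*),\mathbf n\rangle\ge 0$ for every outward unit normal $\mathbf n$ of any supporting submanifold of $\Omega$ at $S(t^*)$. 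Given $\varepsilon>0$, select the Bianchi-convex supporting submanifold $N_\varepsilon$ from Definition~\ref{DefBianchiconvexSet}. The tuple $(\nabla^{g_{t^*}}_{b_i}Rm)_i$ satisfies the second Bianchi identity by a geometric calculation, and since $\nabla^{g_{t^*}}_{b_i}\tilde R=(1-\mu)\nabla^{g_{t^*}}_{b_i}Rm$ the same holds for $\tilde R$; Lemma~\ref{laplacezeigtnachinnen} then gives $\langle\Delta_{g_{t^*}}\tilde R,\mathbf n\rangle\le\varepsilon B$. ODE-invariance combined with Proposition~\ref{ODETangkegel} and Lemma~\ref{tangentconeinhalfspacessm} yields $\Phi(S)\in T_S\Omega\subseteq H_{N_\varepsilon}$, hence $\langle\Phi(\tilde R),\mathbf n\rangle\le 0$ and $\langle(1-\mu)\Phi(Rm_{g_t}),\mathbf n\rangle\le C\mu$. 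Using $\mu'(t^*)=\alpha\mu(t^*)$ and $\langle\lambda I-Rm_{g_t},\mathbf n\rangle\le -r$, summation gives
\[
0 \le \langle S'(t^*),\mathbf n\rangle \le \varepsilon B + C\mu(t^*) - \alpha r\,\mu(t^*) = \varepsilon B + \mu(t^*)(C-\alpha r).
\]
Choosing $\alpha>C/r$ and then $\varepsilon$ small enough that $\varepsilon B<\mu_0(\alpha r-C)$ makes the right side strictly negative, contradicting the lower bound. Therefore $\tilde R_t\in\operatorname{int}(\Omega^{g_t})$ throughout $[0,T']$, and letting $\mu_0\to 0$ together with the closedness of each $\Omega^{g_t}_x$ yields $Rm_{g_t}(x)\in\Omega^{g_t}_x$ for all $(x,t)\in M\times[0,T']$.

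\textbf{Main obstacle.} The hardest part is the simultaneous calibration of the three small parameters $\varepsilon$ (the tolerance in the Bianchi-convexity definition), $\mu_0$ (the perturbation size) and $\alpha$ (the growth rate) against the uniform transversality radius $r$, the gradient bound $B$ and the perturbation constant $C$ — all of these must be controlled uniformly on the space-time slab. Closely related to this is the need to upgrade the pointwise $\varepsilon_0$ of Definition~\ref{coneCondition} to a uniform positive lower bound on $K\cap\partial\Omega$; this should follow from a compactness argument exploiting the cone-like character of the star-shaped condition, but requires care because the supporting submanifolds $N_\varepsilon$ in the Bianchi-convexity definition themselves depend on $\varepsilon$.
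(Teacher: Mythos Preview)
Your approach is essentially identical to the paper's: the same exponential perturbation $\tilde R_t=(1-\mu_0e^{\alpha t})Rm_{g_t}+\mu_0e^{\alpha t}\lambda I$, the same first-touch contradiction, and the same trio of ingredients (Lemma~\ref{laplacezeigtnachinnen} for the Laplacian, Proposition~\ref{ODETangkegel} together with Lemma~\ref{tangentconeinhalfspacessm} for the reaction term $\Phi$, and the content of Lemma~\ref{coneCondImplnI} for the drift $\lambda I-R$).

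Your ``main obstacle'', however, is a non-issue: one does \emph{not} need a uniform lower bound on the $\varepsilon_0$ of Definition~\ref{coneCondition}. The paper's Lemma~\ref{concondFolgtInterior} shows, via invariance of $\Omega$ under the auxiliary ODE $R'=\lambda I-R$, that $R+\alpha(\lambda I-R)$ lies in the \emph{interior} of $\Omega$ for every $R\in\Omega$ and every $\alpha\in(0,1)$; hence any $\mu_0\in(0,1)$ already places $\tilde R_0$ in the interior, and no compactness argument on $\varepsilon_0$ is required. The paper also chooses the Bianchi-convexity tolerance (your $\varepsilon$, its $\delta$) \emph{after} the touching point $(x_0,t_0)$ is known, against the actual value of $\sum_i\|\nabla_{b_i}R^\varepsilon_{t_0}\|^2$ at that point, so the uniform gradient bound $B$ is unnecessary as well---this dissolves the simultaneous-calibration difficulty you describe.
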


In Theorem \ref{meinMP}, $I$ denotes the identity in $\A$. 

\begin{bem} \label{HamiltonSetsUTS}
Each $O(n)$-invariant, closed and convex set $\Omega \subseteq \A$ is transversally star-shaped with respect to $\lambda I$ for some $\lambda \in \R$; more precisely, there is a $\lambda \in \R$ such that for each $R \in \partial \Omega$ we have that $R + \alpha (\lambda I - R) \in \Omega$ for all $\alpha \in [0,1]$. Namely, let $R_0 \in \Omega$. It is clear that
\begin{align*}
S := \frac{1}{\vol(O(n))} \int_{O(n)} \rho(Q)R_0 dQ 
\end{align*}
is invariant under the representation $\rho$ of $O(n)$, hence so is the space $\R S \subseteq \A$ of all multiples of $S$. It is well-known, however, that the only one-dimensional irreducible subrepresentation of $\A$ is the space $\R I$ of multiples of the identity. Therefore, there is a $\lambda \in \R$ such that $S = \lambda I$. Since $\Omega$ is $O(n)$-invariant (thus all integrants are contained in $\Omega$), closed and convex, the weighted integral $S$ (and therefore $\lambda I$) is contained in $\Omega$. Again due to the convexity of $\Omega$, this shows that the connecting line $(1-\alpha)R + \alpha \lambda I = R + \alpha (\lambda I - R)$ is in $\Omega$ for $\alpha \in [0,1]$. \\
For essentially all known $O(n)$-invariant, closed and convex sets which are invariant under \eqref{ODEeinleitung}, it is furthermore easy to see that they are even uniformly transversally star-shaped with respect to $\lambda I$ for some $\lambda \in \R$.
\end{bem}

The rest of the chapter is dedicated to a proof of the maximum principle, Theorem \ref{meinMP}, for which we need the following two auxiliary lemmas.

\begin{lem} \label{concondFolgtInterior}
If a closed set $\Omega \subseteq \A$ is uniformly transversally star-shaped with respect to $S \in \A$, then for each $R \in \Omega$, we have that $R + \alpha (S-R)$ is in the interior of $\Omega$ for all $\alpha \in (0,1)$.
\end{lem}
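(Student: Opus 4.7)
The approach is to first establish that $\Omega$ is star-shaped with respect to $S$, and then upgrade this to the desired interior statement via a scaling/homothety argument, handling the boundary case $R\in\partial\Omega$ by using UTS once to step off the boundary onto a nearby interior point of the segment and reducing to the interior case.

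\textbf{Star-shape.} Writing $\phi(t):=R+t(S-R)$ and setting $t_1:=\sup\{t\in[0,1]\mid\phi([0,t])\subseteq\Omega\}$, closedness of $\Omega$ gives $\phi([0,t_1])\subseteq\Omega$, and $t_1<1$ would force $\phi(t_1)\in\partial\Omega$ (as a limit of points outside $\Omega$). The UTS property applied to the compact set $K:=\phi([0,1])$ then produces some $\varepsilon_0>0$ with $\phi\bigl(t_1+\varepsilon(1-t_1)\bigr)=\phi(t_1)+\varepsilon(S-\phi(t_1))\in\Omega$ for all $\varepsilon\in[0,\varepsilon_0)$, contradicting the supremum property. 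Hence $\phi([0,1])\subseteq\Omega$.

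\textbf{Interior case} ($R\in\mathrm{int}(\Omega)$). Pick $\rho>0$ with $B_\rho(R)\subseteq\Omega$. Since every $R'\in B_\rho(R)$ lies in $\Omega$, the star-shape step yields $R'+\alpha(S-R')\in\Omega$ for all $\alpha\in[0,1]$. The short homothety calculation
\[
\{R'+\alpha(S-R')\mid R'\in B_\rho(R)\}=B_{(1-\alpha)\rho}\bigl(R+\alpha(S-R)\bigr)
\]
then places an open ball of positive radius around $R+\alpha(S-R)$ inside $\Omega$ for every $\alpha\in[0,1)$, so $R+\alpha(S-R)\in\mathrm{int}(\Omega)$ throughout this range.

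\textbf{Boundary case} ($R\in\partial\Omega$). This is where I expect the main subtlety: UTS at $R$ only provides a cone whose cross-section collapses to zero at $R$ itself and therefore does \emph{not} directly produce a neighborhood of $R$ in $\Omega$. The workaround is to use UTS just to get off the boundary. Concretely, UTS at $R$ furnishes $r>0$ and $\varepsilon_0>0$ with $B_{\varepsilon r}(R+\varepsilon(S-R))\subseteq\Omega$ for $\varepsilon\in[0,\varepsilon_0)$, so for any fixed $\varepsilon_*\in(0,\varepsilon_0)$ the point $R'':=R+\varepsilon_*(S-R)$ lies in $\mathrm{int}(\Omega)$. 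Applying the interior case to $R''$ and using the affine identity $R''+\alpha'(S-R'')=R+\bigl(\varepsilon_*+\alpha'(1-\varepsilon_*)\bigr)(S-R)$ gives $R+\alpha(S-R)\in\mathrm{int}(\Omega)$ for all $\alpha\in[\varepsilon_*,1)$; letting $\varepsilon_*\to 0^+$ covers the full range $\alpha\in(0,1)$.
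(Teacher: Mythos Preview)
Your argument is correct and follows essentially the same three-step structure as the paper: establish star-shape, handle the interior case via homothety, and reduce the boundary case to the interior case using the UTS cone. The one notable difference is in the star-shape step: where you run a direct supremum argument along the segment, the paper instead observes that UTS forces $S-R\in T_R\Omega$ for all $R\in\partial\Omega$ and then invokes Proposition~\ref{ODETangkegel} with the linear ODE $R'(t)=S-R(t)$ (whose solution traces out the segment). Your version is a bit more self-contained; the paper's version reuses machinery already in place. The remaining steps (homothety for the interior case, using a UTS ball on the segment to bootstrap the boundary case) are the same in substance.
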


\begin{proof}
From the assumption, it immediately follows that $S-R \in T_R\Omega$ for all $R \in \partial \Omega$. Therefore, by Proposition \ref{ODETangkegel}, $\Omega$ is invariant under the ordinary differential equation 
\begin{align} \label{ODES-R}
R'(t) = S-R(t). 
\end{align}
Let $R \in \Omega$. Then $R(t) := R + (1-e^{-t})(S-R)$ is a solution to \eqref{ODES-R} with $R(0) = R$. Therefore, $R(t) \in \Omega$ for all $t \in [0,\infty)$, which by the closedness of $\Omega$ means that $R + \alpha (S-R) \in \Omega$ for all $\alpha \in [0,1]$. 
\medskip

Now, let $R \in \partial \Omega$, $K \subseteq \A$ be a compact set containing $R$ and $r >0$ be as in Definition \ref{coneCondition}. By assumption, there is an $\varepsilon_0 > 0$ such that $R + \varepsilon B_r(S-R) \subseteq \Omega$ for all $\varepsilon \in [0, \varepsilon_0)$. It follows that $(1-\alpha) \cup_{\varepsilon \in [0,\varepsilon_0)}(R +  \varepsilon B_r(S-R)) + \alpha S \subseteq \Omega$ for all $\alpha \in [0,1]$. Therefore, $(1-\alpha)R+ \alpha S$ is contained in the interior of $\Omega$ for all $\alpha \in (0,1)$.
\medskip

If $R$ is in the interior of $\Omega$, then there is a neighborhood $U$ of $R$ which is contained in the interior of $\Omega$ as well. Therefore, $(1-\alpha)U + \alpha S \subseteq \Omega$ for all $\alpha \in [0,1]$, which shows that $(1-\alpha)R+ \alpha S$ is in the interior of $\Omega$ for all $\alpha \in [0,1)$.
\end{proof}

\begin{lem} \label{coneCondImplnI}
If a closed set $\Omega \subseteq \A$ is uniformly transversally star-shaped with respect to $S \in \A$, then for each compact set $K \subseteq \A$, we have that
\begin{align} \label{asupnI}
-a := \sup \langle n,S-R \rangle <0,
\end{align}
where the supremum is taken over all $R \in K \cap \partial \Omega$ and $n \in \A$ with $\langle n,v \rangle \leq 0$ for all $v \in T_R\Omega$ and $\|n\|=1$ (i.e.\ outward pointing generalized normal vectors $n$ on $\partial \Omega$ at $R$).
\end{lem}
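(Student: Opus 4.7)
The plan is to use the uniform radius $r > 0$ provided by Definition \ref{coneCondition} directly to obtain a uniform negative upper bound on $\langle n, S - R \rangle$.

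Given the compact set $K \subseteq \A$, fix the corresponding $r > 0$ from the uniformly transversally star-shaped condition. The first step is to translate the enclosure $R + \varepsilon B_r(S-R) \subseteq \Omega$ for $\varepsilon \in [0,\varepsilon_0)$ into an inclusion of vectors in the tangent cone. Concretely, for any $R \in K \cap \partial \Omega$ and any $w \in B_r(S-R)$, the curve $\gamma(\varepsilon) := R + \varepsilon w$ satisfies $\gamma(0) = R$, $\dot{\gamma}(0) = w$, and $\gamma(\varepsilon) \in \Omega$ for all $\varepsilon \in [0,\varepsilon_0)$, so by Definition \ref{DefTangentcone} we obtain $w \in T_R\Omega$. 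Thus the entire open ball $B_r(S-R)$ is contained in $T_R\Omega$.

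The second step is to plug in a cleverly chosen test vector. Let $R \in K \cap \partial\Omega$ and let $n$ be any unit outward normal at $R$, i.e.\ $\|n\| = 1$ and $\langle n, v \rangle \leq 0$ for all $v \in T_R\Omega$. Choosing
\begin{align*}
w := (S - R) + \tfrac{r}{2}\, n,
\end{align*}
we have $\|w - (S-R)\| = r/2 < r$, so $w \in B_r(S-R) \subseteq T_R\Omega$, hence $\langle n, w \rangle \leq 0$. Expanding this inner product yields
\begin{align*}
\langle n, S-R \rangle + \tfrac{r}{2}\, \|n\|^2 = \langle n, S - R \rangle + \tfrac{r}{2} \leq 0,
\end{align*}
so $\langle n, S-R \rangle \leq -r/2$. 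Since this bound is uniform in $R \in K \cap \partial\Omega$ and in the choice of unit outward normal $n$, taking the supremum gives $-a \leq -r/2 < 0$, which is precisely \eqref{asupnI}.

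I do not foresee any real obstacle here: the whole content of the lemma is that the word ``uniformly'' in Definition \ref{coneCondition}, encoded as the radius $r$ being independent of the point $R \in K \cap \partial\Omega$, is exactly what produces a uniform (in $R$ and $n$) negative gap in the pairing $\langle n, S-R\rangle$. The only minor point to be careful about is that the radius $r$ may depend on $K$, but since the conclusion is formulated for a fixed compact $K$, this is harmless.
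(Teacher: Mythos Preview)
Your argument is correct. Both proofs begin identically by showing that the hypothesis $R+\varepsilon B_r(S-R)\subseteq\Omega$ for $\varepsilon\in[0,\varepsilon_0)$ forces $B_r(S-R)\subseteq T_R\Omega$. From there the paper takes a more geometric route: it passes to the cone $\R_{>0}B_r(S-R)\subseteq T_R\Omega$ (using that tangent cones are scale-invariant), introduces its opening angle $\alpha$, argues that any unit outward normal $n$ must satisfy $\sphericalangle(n,S-R)>\tfrac{\alpha}{2}+\tfrac{\pi}{2}$, and then bounds $\langle n,S-R\rangle$ via $\cos(\tfrac{\alpha}{2}+\tfrac{\pi}{2})<0$ and the compactness of $K$. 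Your approach bypasses the trigonometry entirely: by testing the normal condition against the single vector $w=(S-R)+\tfrac{r}{2}n\in B_r(S-R)$, you obtain the clean explicit bound $\langle n,S-R\rangle\le -\tfrac{r}{2}$ directly. This is shorter and gives a uniform constant depending only on $r$ (hence only on $K$) without invoking $\max_{\widetilde R\in K\cap\partial\Omega}\|S-\widetilde R\|$ as the paper does; the price is a slightly weaker constant ($-r/2$ rather than essentially $-r$), which is irrelevant for the application.
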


\begin{proof}
Let $K \subseteq \A$ be compact, $r >0$ as in Definition \ref{coneCondition}, $R \in K \cap \partial \Omega$ and $n \in \A$ with and $\|n\|=1$ and $\langle n,v \rangle \leq 0$ for all $v \in T_R\Omega$. By assumption, there is an $\varepsilon_0 > 0$ such that $R + \varepsilon B_r(S-R) \subseteq \Omega$ for all $\varepsilon \in [0, \varepsilon_0)$. Hence, $B_r(S-R) \subseteq T_R\Omega$ and by scale-invariance of $T_R\Omega$, it follows that $\R_{>0}B_r(S-R) \subseteq T_R\Omega$. Let $\alpha \in (0,\pi)$ denote the opening angle of the cone $\R_{>0}B_r(S-R)$. Then
\begin{align*}
\arccos\left( \frac{\langle n,S-R \rangle}{\|S-R\|} \right) = \sphericalangle(n,S-R) > \frac{\alpha}{2} + \frac{\pi}{2}
\end{align*}
and since $\frac{\alpha}{2} + \frac{\pi}{2} \in (\frac{\pi}{2},\pi)$, we find that
\begin{align*}
 \frac{\langle n,S-R \rangle}{\|S-R\|} < \cos \left( \frac{\alpha}{2} + \frac{\pi}{2} \right).
\end{align*}
Hence,
\begin{align*}
\langle n,S-R \rangle < \|S-R\| \cos \left( \frac{\alpha}{2} + \frac{\pi}{2} \right) \leq \max_{\widetilde{R} \in K\cap \partial \Omega} \|S-\widetilde{R}\| \cos \left( \frac{\alpha}{2} + \frac{\pi}{2} \right) <0.
\end{align*}
Since $\alpha$ only depends on $K$, this finishes the proof.
\end{proof}

Now, we are in the position to prove the maximum principle in the Bianchi-convex setting.

\begin{proof}[Proof of Theorem \ref{meinMP}.] 
Let $M$ be an $n$-dimensional compact manifold and $g_t$, $t \in [0,T)$, be a solution to the Ricci flow with $g_0$ satisfying $\Omega$, i.e.\ with $Rm_{g_0}(x) \in \Omega^{g_0}_x$ for all $x \in M$. Let $T_1 \in (0,T)$. We will show that $Rm_{g_t} \in \Omega^{g_t}$ for all $t \in [0,T_1]$. To this end, let $a >0$ be defined by \eqref{asupnI} as in Lemma \ref{coneCondImplnI} with $K=B_r(0)$, were $r>0$ is so large that 
\begin{align*}
\{ p^*Rm_{g_t}(x) \mid x \in M \ \text{and} \ p \in O^{g_t}_x  \} \subseteq B_r(0)
\end{align*}
for all $t \in [0,T_1]$, and set 
\begin{align*}
L &:= \max_{(x,t) \in M \times [0,T_1]} \| Rm_{g_t}(x) + Rm_{g_t}(x) \# I_{g_t}(x) \|_{g_t} \\
\text{and} \ \ P &:=  \max_{(x,t) \in M \times [0,T_1]} \| \lambda I_{g_t}(x) - Rm_{g_t}(x)\|_{g_t},
\end{align*}
where $I_{g_t} \in \Gamma(M,S^2_B(\Lambda^2T^*M))$ with $I_{g_t}(x)$ being the identity in $S^2_B(\Lambda^2T^*_xM)$ with respect to $g_t$ for all $x \in M$.
By the compactness of $M \times [0,T_1]$, the constants $r$, $L$ and $P$ are finite. Moreover, we choose $b > \frac{2|\lambda|L}{a}$ and $\varepsilon_0 \in (0,1)$ such that
\begin{align*}
 \varepsilon_0 e^{bT_1} < \min \left\{ \frac{1}{2},  \frac{ab-2|\lambda|L}{\lambda^2 \sqrt{2n(n-1)^3} +bP} \right\}.
\end{align*}
For $\varepsilon \in( 0,\varepsilon_0)$, we define  
\begin{align*}
R^{\varepsilon}_t := Rm_{g_t} + \varepsilon e^{bt} (\lambda I_{g_t} - Rm_{g_t}) = (1-\varepsilon e^{bt}) Rm_{g_t} + \varepsilon e^{bt} \lambda I_{g_t}
\end{align*}
for $t \in [0,T)$. Using Lemma \ref{evolutionRmallg} and that $I + I^{\#} = (n-1)I$, by \cite[Lemma 2.1]{BoehmWilkingAnnals}, we find that
\begin{align} \begin{split} \label{evolReps}
\nabla_{\!\frac{\partial}{\partial t}} R^{\varepsilon}_t &\stackrel{\ref{evolutionRmallg}}{=} -\varepsilon b e^{bt}Rm_{g_t} + (1-\varepsilon e^{bt}) \left( \Delta_{g_t}Rm_{g_t} + Rm_{g_t}^2 + Rm_{g_t}^{\#} \right) + \varepsilon b e^{bt} \lambda I_{g_t} \\
&\ = \ \Delta_{g_t}R^{\varepsilon}_t + \varepsilon b e^{bt} \left(\lambda I_{g_t} -Rm_{g_t} \right)  \\
&\ \ \ \ \ +  \frac{1}{1-\varepsilon e^{bt}} \big( (R^{\varepsilon}_t)^2 + (R^{\varepsilon}_t)^{\#} - \varepsilon^2 e^{2bt} \lambda^2 (I_{g_t} + I_{g_t}^{\#}) \\
& \ \ \ \ \ \ \  \ \ \ \ \ \ \  \ \ \ \ \ \ \  - 2 (1-\varepsilon e^{bt}) \varepsilon e^{bt} \lambda (Rm_{g_t} + Rm_{g_t} \# I_{g_t}) \big)  \\
&\ = \ \Delta_{g_t}R^{\varepsilon}_t + \varepsilon b e^{bt} \left(\lambda I_{g_t} -R^{\varepsilon}_t \right)+ \varepsilon^2 b e^{2bt} \left(\lambda I_{g_t} -Rm_{g_t} \right) \\
&\ \ \ \ \ + \frac{1}{1-\varepsilon e^{bt}} \left( (R^{\varepsilon}_t)^2 + (R^{\varepsilon}_t)^{\#} \right) - \frac{\varepsilon^2e^{2bt} \lambda^2(n-1)}{1- \varepsilon e^{bt}}I_{g_t} - 2\varepsilon e^{bt} \lambda (Rm_{g_t} + Rm_{g_t} \# I_{g_t}) .
\end{split}
\end{align}
By assumption and Lemma \ref{concondFolgtInterior}, $R^{\varepsilon}_0 = Rm_{g_0} + \varepsilon(\lambda I_{g_0} - Rm_{g_0} )$ is in the interior of $\Omega^{g_0}$.
We claim that $R^{\varepsilon}_t$ is in the interior of $\Omega^{g_t}$ for all $t \in [0,T_1]$. Suppose this is not true. Then there is a minimal time $t_0 \in (0, T_1]$ such that $R^{\varepsilon}_{t_0}(x_0) \in \partial \Omega^{g_{t_0}}_{x_0}$ for some $x_0 \in M$, since $M$ is compact. Hence, $R^{\varepsilon}_t(x) \in \Omega^{g_t}_x$ for all $t \in [0,t_0]$ and $x \in M$, and in particular $R^{\varepsilon}_{t_0} \in \Gamma(M,\Omega^{g_{t_0}})$.  
Let  $t \mapsto p_t \in O^{g_{t}}_{x_0}$ be parallel and set $S_0:=p_{t_0}^*R^{\varepsilon}_{t_0}(x_0) \in \partial \Omega$. Furthermore, for $i=1, \dots, n$, set $b_i := p_{t_0}(e_i)$, where $(e_1, \dots, e_n)$ denotes the standard basis of $\R^n$. Then $(b_1, \dots, b_n)$ is an orthonormal basis of $T_{x_0}M$ with respect to $g_{t_0}$.
We choose $\delta > 0$ such that
\begin{align*}
 \delta \sum\limits_{i=1}^n \|\nabla^{g_{t_0}}_{b_i}R^{\varepsilon}_{t_0}\|_{g_{t_0}}^2  < \varepsilon e^{bt_0} \left(ab-2|\lambda|L-\varepsilon_0 e^{bT_1}\lambda^2\sqrt{2n(n-1)^3} + \varepsilon_0 be^{bT_1}P \right) .
\end{align*}
By the choice of the constants $b$ and $\epsilon_0$, the right-hand side of this inequality is positive.
Since $\Omega$ is Bianchi-convex, there is a supporting submanifold $N$ of $\Omega$ in $S_0$ with
\begin{align*} 
\sum_{i=1}^n \text{\Gemini}^N_S(T_i,T_i) \leq \delta \sum_{i=1}^n \|T_i\|^2
\end{align*}
for all $S \in N$ and $(T_1, \dots, T_n) \in (T_SN)^n$ that satisfy the second Bianchi identity. It follows that $(p_{t_0}^{-1})^*N$ is a supporting submanifold of $\Omega^{g_{t_0}}_{x_0}$ in $R^{\varepsilon}_{t_0}(x_0)$ with 
\begin{align*}
\sum_{i=1}^n \text{\Gemini}^{(p_{t_0}^{-1})^*N}_S(T_i,T_i) \leq \delta \sum_{i=1}^n \|T_i\|_{g_{t_0}}^2
\end{align*}
for all $S \in (p_{t_0}^{-1})^*N$ and $(T_1, \dots, T_n) \in (T_S(p_{t_0}^{-1})^*N)^n$ that satisfy the second Bianchi identity.

By $\mathbf{n}_{S_0}$, we denote the unit normal on $N$ at $S_0$ pointing in the opposite direction of $\Omega$. Then $\mathbf{n}_{R^{\varepsilon}_{t_0}(x_0)} := (p_{t_0}^{-1})^*\mathbf{n}_{S_0}$ is the unit normal on $(p_{t_0}^{-1})^*N$ at $R^{\varepsilon}_{t_0}(x_0)$ pointing in the opposite direction of $\Omega^{g_{t_0}}_{x_0}$. 

Let further $r^N$ be a signed distance function from $N$. Now, using that $\Omega$ is invariant under the ordinary differential equation \eqref{ODEeinleitung} in combination with Proposition \ref{ODETangkegel} and Lemma \ref{tangentconeinhalfspacessm}, and applying Lemma \ref{laplacezeigtnachinnen}, we can compute that
\begin{align*}
\frac{d}{dt}\bigg{|}_{t=t_0}r^N(p_t^*R^{\varepsilon}_t(x_0)) &= \ dr^N_{S_0}\left( \frac{d}{dt}\bigg{|}_{t=t_0}p_t^*R^{\varepsilon}_t(x_0) \right) \\
&= \ dr^N_{S_0}\left( p_{t_0}^*\nabla_{\!\frac{\partial}{\partial t}}R^{\varepsilon}_{t}(x_0) \Big{|}_{t=t_0} \right) \\
&\!\stackrel{\eqref{evolReps}}{=} \left\langle \mathbf{n}_{S_0} , p_{t_0}^*\Delta_{g_{t_0}}R^{\varepsilon}_{{t_0}}(x_0) \right\rangle + \frac{1}{1- \varepsilon e^{bt_0}} \left\langle  \mathbf{n}_{S_0}, p_{t_0}^*\big(R^{\varepsilon}_{{t_0}}(x_0)^2 + R^{\varepsilon}_{{t_0}}(x_0)^{\#}\big) \right\rangle \\
& \ \ \ \ -2\varepsilon e^{bt_0} \lambda \left\langle \mathbf{n}_{S_0}, p_{t_0}^*\left(Rm_{g_{t_0}}(x_0) + Rm_{g_{t_0}}(x_0) \# I_{g_{t_0}}(x_0)\right) \right\rangle \\
& \ \ \ \ - \frac{\varepsilon^2 e^{2bt_0}\lambda^2(n-1)}{1- \varepsilon e^{bt_0}} \left\langle \mathbf{n}_{S_0} , I \right\rangle + \varepsilon b e^{bt_0} \left\langle \mathbf{n}_{S_0}, p_{t_0}^*\left( \lambda I_{g_{t_0}}(x_0) - R^{\varepsilon}_{t_0}(x_0) \right) \right\rangle \\
& \ \ \ \ + \varepsilon^2 b e^{2bt_0} \left\langle \mathbf{n}_{S_0}, p_{t_0}^*\left( \lambda I_{g_{t_0}}(x_0) - Rm_{g_{t_0}}(x_0) \right) \right\rangle 
\end{align*}

\begin{align*}
&\leq \ \left\langle (\mathbf{n}_{R^{\varepsilon}_{t_0}(x_0)} , \Delta_{g_{t_0}}R^{\varepsilon}_{{t_0}}(x_0) \right\rangle_{g_{t_0}} +2 \underbrace{ \left\langle  \mathbf{n}_{S_0}, S_0^2 + S_0^{\#} \right\rangle}_{\stackrel{\ref{ODETangkegel},\ref{tangentconeinhalfspacessm}}{\leq} 0}  + \frac{\varepsilon^2 e^{2bt_0}\lambda^2(n-1)}{1-\varepsilon e^{bt_0}} \|I\|\\
& \ \ \ \ -2\varepsilon e^{bt_0} \lambda \left\langle \mathbf{n}_{R^{\varepsilon}_{t_0}(x_0)}, \left(Rm_{g_{t_0}}(x_0) + Rm_{g_{t_0}}(x_0) \# I_{g_{t_0}}(x_0)\right) \right\rangle_{g_{t_0}}  \\
& \ \ \ \ + \varepsilon b e^{bt_0} \underbrace{\left\langle \mathbf{n}_{R^{\varepsilon}_{t_0}(x_0)}, \left( \lambda I_{g_{t_0}}(x_0) - R^{\varepsilon}_{t_0}(x_0) \right) \right\rangle_{g_{t_0}}}_{\leq -a}  \\
& \ \ \ \ + \varepsilon^2 b e^{2bt_0} \left\langle \mathbf{n}_{R^{\varepsilon}_{t_0}(x_0)}, \left( \lambda I_{g_{t_0}}(x_0) - Rm_{g_{t_0}}(x_0) \right) \right\rangle_{g_{t_0}} \\
&\leq \ \left\langle \mathbf{n}_{R^{\varepsilon}_{t_0}(x_0)} , \Delta_{g_{t_0}}R^{\varepsilon}_{{t_0}}(x_0) \right\rangle_{g_{t_0}} +  \varepsilon^2 e^{2bt_0} \lambda^2 \sqrt{2n(n-1)^3} \\
& \ \ \ \ + 2\varepsilon e^{bt_0} |\lambda| \underbrace{\left\|Rm_{g_{t_0}}(x_0) + Rm_{g_{t_0}}(x_0) \# I_{g_{t_0}}(x_0)\right\|_{g_{t_0}}}_{\leq L} \\
&\ \ \ \ - \varepsilon b e^{bt_0} a + \varepsilon^2 b e^{2bt_0} \underbrace{\|\lambda I_{g_{t_0}}(x_0) - R^{\varepsilon}_{t_0}(x_0)\|_{g_{t_0}}}_{\leq P}\\
&\stackrel{\ref{laplacezeigtnachinnen}}{\leq} \; \delta \sum_{i=1}^n \|\nabla^{g_{t_0}}_{b_i}R^{\varepsilon}_{{t_0}}\|_{g_{t_0}}^2  + \varepsilon  e^{bt_0} \big(2|\lambda|L -  a b + \varepsilon_0 e^{bT_1} \lambda^2 \sqrt{2n(n-1)^3} + \varepsilon_0 e^{bT_1}P\big) \\
&< \ 0.
\end{align*}
Here, we could apply Lemma \ref{laplacezeigtnachinnen} for $C = \Omega^{g_{t_0}}$ since with $Rm_{g_{t_0}}$ also $R^{\varepsilon}_{t_0} = Rm_{g_{t_0}} + \varepsilon e^{bt_0} ( \lambda I_{g_{t_0}}- Rm_{g_{t_0}})$ satisfies the second Bianchi identity and, in addition, $R^{\varepsilon}_{t_0} \in \Gamma(M,\Omega^{g_{t_0}})$. This together with the fact that $r^N(p_{t_0}^*R^{\varepsilon}_{t_0}(x_0)) = r^N(S_0) = 0$ yields that there is a $\mu >0$ such that 
\begin{align*}
r^N(p_t^*R^{\varepsilon}_t(x_0)) > 0
\end{align*}
for all $t \in (t_0-\mu, t_0)$. Therefore, $p_t^*R^{\varepsilon}_t(x_0) \notin \Omega$, thus $R^{\varepsilon}_t(x_0) \notin \Omega^{g_t}_{x_0}$ for all $t \in (t_0-\mu, t_0)$. This, however, is a contradiction to $R^{\varepsilon}_t(x_0) \in \Omega^{g_t}_{x_0}$ for all $t \in [0,t_0]$ as assumed in the beginning of the proof. Hence, as we claimed, $R^{\varepsilon}_t(x)$ is in the interior of $\Omega^{g_t}_x$ for all $x \in M$ and $t \in [0,T_1]$.
\medskip

Since $\Omega^{g_t}$ is closed and $R^{\varepsilon}_t(x)$ converges to $Rm_{g_t}(x)$ as $\varepsilon$ tends to zero for each $t \in [0,T_1]$ and $x \in M$, we have that $Rm_{g_t}(x) \in \Omega^{g_t}_x$ for all $t \in [0,T_1]$ and $x \in M$. Since $T_1$ was chosen arbitrarily, this is true for all $t \in [0,T)$. Consequently, we have shown that $\Omega$ is invariant under the Ricci flow.
\end{proof}

\bibliography{Literatur}

\newcommand{\etalchar}[1]{$^{#1}$}
\begin{thebibliography}{CCG{\etalchar{+}}08}

\bibitem[Bei18]{ThesisSFB}
Stine~Franziska Beitz.
\newblock {\em Bianchi-convexity and applications to Ricci flow}.
\newblock PhD thesis, WWU M\"unster, 2018.

\bibitem[BS09]{1/4pinchedcurvature}
Simon Brendle and Richard Schoen.
\newblock Manifolds with {$1/4$}-pinched curvature are space forms.
\newblock {\em J. Amer. Math. Soc.}, 22(1):287--307, 2009.

\bibitem[BW08]{BoehmWilkingAnnals}
Christoph B\"ohm and Burkhard Wilking.
\newblock Manifolds with positive curvature operators are space forms.
\newblock {\em Ann. of Math. (2)}, 167(3):1079--1097, 2008.

\bibitem[CCG{\etalchar{+}}08]{RFtechniquesCCGGCIIKLLL}
Bennett Chow, Sun-Chin Chu, David Glickenstein, Christine Guenther, James
  Isenberg, Tom Ivey, Dan Knopf, Peng Lu, Feng Luo, and Lei Ni.
\newblock {\em The {R}icci flow: techniques and applications. {P}art {II}},
  volume 144 of {\em Mathematical Surveys and Monographs}.
\newblock American Mathematical Society, Providence, RI, 2008.
\newblock Analytic aspects.

\bibitem[Ham82]{3mfdsHamilton}
Richard~S. Hamilton.
\newblock Three-manifolds with positive {R}icci curvature.
\newblock {\em J. Differential Geom.}, 17(2):255--306, 1982.

\bibitem[Ham86]{4-mfdsHamilton}
Richard~S. Hamilton.
\newblock Four-manifolds with positive curvature operator.
\newblock {\em J. Differential Geom.}, 24(2):153--179, 1986.

\bibitem[Per02]{EntropyPerelman}
Grisha Perelman.
\newblock The entropy formula for the {R}icci flow and its geometric
  applications.
\newblock arXiv:math/0211159 [math.DG], 2002.

\bibitem[Per03a]{FiniteExtinctionTime}
Grisha Perelman.
\newblock Finite extinction time for the solutions to the {R}icci flow on
  certain three-manifolds.
\newblock arXiv:math/0307245 [math.DG], 2003.

\bibitem[Per03b]{RicciflowWithSurgery}
Grisha Perelman.
\newblock Ricci flow with surgery on three-manifolds.
\newblock arXiv:math/0303109 [math.DG], 2003.

\end{thebibliography}

\end{document}